\newcolumntype{L}[1]{>{\raggedright\let\newline\\\arraybackslash\hspace{0pt}}m{#1}}
\newcolumntype{C}[1]{>{\centering\let\newline\\\arraybackslash\hspace{0pt}}m{#1}}
\newcolumntype{R}[1]{>{\raggedleft\let\newline\\\arraybackslash\hspace{0pt}}m{#1}}
\newtheorem{theorem}{Theorem}
\newtheorem{proposition}[theorem]{Proposition}
\theoremstyle{definition}
\newtheorem{lemma}[theorem]{Lemma}
\theoremstyle{remark}
\numberwithin{equation}{section}
\numberwithin{table}{section}
\numberwithin{figure}{section}
\numberwithin{algorithm}{section}
\definecolor{myBlue}{RGB}{113,104,238} 
\definecolor{myGreen}{RGB}{154,205,50} 
\definecolor{myGreen2}{RGB}{114,175,30} 
\definecolor{myRed}{RGB}{180,50,50}  
\definecolor{myOrange}{RGB}{225,92,22} 
\definecolor{lgray}{RGB}{200,200,200} 
\definecolor{llgray}{RGB}{155,155,155} 
\definecolor{mycolor1}{rgb}{0.00000,0.44700,0.74100}%
\definecolor{mycolor2}{rgb}{0.85000,0.32500,0.09800}%
\definecolor{mycolor3}{rgb}{0.92900,0.69400,0.12500}%
\definecolor{mycolor4}{rgb}{0.49400,0.18400,0.55600}%
\definecolor{mycolor5}{rgb}{0.46600,0.67400,0.18800}%
\definecolor{mycolor6}{rgb}{0.30100,0.74500,0.93300}%
\definecolor{mycolor7}{rgb}{0.63500,0.07800,0.18400}%
\newcommand\R{\mathbb R}
\newcommand\Z{\mathbb Z}
\DeclareMathOperator{\supp}{supp}
\DeclareMathOperator{\diag}{diag}
\DeclareMathOperator{\tr}{tr}
\newlength\figurewidth
\DeclareMathAlphabet{\dutchcal}{U}{dutchcal}{m}{n}
\SetMathAlphabet{\dutchcal}{bold}{U}{dutchcal}{b}{n}
\DeclareMathAlphabet{\dutchbcal} {U}{dutchcal}{b}{n}
\newcommand{\M}{\mathcal{M}}
\newcommand{\m}{\dutchcal{m}}
\newcommand{\adj}{\operatorname{adj}}
\newcommand{\pc}{{\operatorname{pc}}}
\newcommand{\qc}{{\operatorname{qc}}}
\renewcommand{\c}{{\operatorname{c}}}
\newcommand{\lscc}{\operatorname{lsc-c}}
\newcommand{\lscpc}{\operatorname{lsc-pc}}
\newcommand{\lscsvpc}{\operatorname{lsc-svpc}}
\newcommand{\SO}{\mathcal{SO}}
\newcommand{\Pid}{\Pi_d}
\DeclareMathOperator{\dev}{dev}
\begin{document}
	\title[Computational polyconvexification of isotropic functions]{Computational polyconvexification of isotropic functions}
	\author[]{T.~Neumeier$^{*}$, M.~A.~Peter$^{\dagger}$, D.~Peterseim$^{\dagger}$, D.~Wiedemann$^{*}$}
	\address{${}^{*}$ Institute of Mathematics, University of Augsburg, Universit\"atsstr.~12a, 86159 Augsburg, Germany}
	\email{\{timo.neumeier, david.wiedemann\}@uni-a.de}
	\address{${}^{\dagger}$ Institute of Mathematics \& Centre for Advanced Analytics and Predictive Sciences (CAAPS), University of Augsburg, Universit\"atsstr.~12a, 86159 Augsburg, Germany}
	\email{\{malte.peter, daniel.peterseim\}@uni-a.de}
	
	\thanks{T.~Neumeier, M.~A.~Peter and D.~Peterseim gratefully acknowledge funding from the German Research Foundation (DFG) within the Priority Programme 2256 \emph{Variational Methods for Predicting Complex Phenomena in Engineering Structures and Materials} (Reference IDs PE1464/7-1 and PE2143/5-1). D.~Wiedemann thanks the Free State of Bavaria for funding through the Marianne--Plehn--Program.
	}
	\date{\today}

	\begin{abstract}
		Based on the characterization of the polyconvex envelope of isotropic functions by their signed singular value representations, we propose a simple algorithm for the numerical approximation of the polyconvex envelope. Instead of operating on the $d^2$-dimensional space of matrices, the algorithm requires only the computation of the convex envelope of a function on a $d$-dimensional manifold, which is easily realized by standard algorithms. The significant speedup associated with the dimensional reduction from $d^2$ to $d$ is demonstrated in a series of numerical experiments. 
	\end{abstract}

	\maketitle

	{\tiny {\bf Key words.} Polyconvex envelope, convexification, numerical relaxation, isotropy}\\
	\indent
	{\tiny {\bf AMS subject classifications.} {\bf 49J45}, {\bf 49J10}, {\bf 74G65}, {\bf 74B20}} 

\section{Introduction}\label{sec:intro}
Many applications in the field of nonlinear elasticity aim at finding a global minimizer of functionals of the form
\begin{equation} \label{eq:energyFunctional}
I(u) = \int_{\Omega} W\left(\nabla u(x)\right) \, \mathrm{d} x
\end{equation}
over a domain $\Omega \subset \R^{d}$ in spatial dimension $d \in \{2, 3\}$ for a suitable weak class of deformations ${u\colon\Omega\rightarrow \R^d}$. 
In many relevant cases, the density ${W\colon\R^{d \times d} \to \R_\infty \coloneqq \R \cup \{\infty\}}$ does not satisfy a suitable notion of convexity and the existence of minimizers cannot be guaranteed. In fact, the infimum may not be reached and nonconvexity, e.g., a multiwell structure of the energy density, may lead to the emergence of increasingly fine microstructures within the minimizing sequences. 
Moreover, the application of standard discretization methods to describe oscillations in the infimizing sequences of $I$ typically leads to mesh-dependent results with oscillations in the discrete deformation gradient at the length scale of the mesh size. Therefore, alternative approaches are introduced for both mathematical analysis and numerical simulation using relaxed formulations that focus on macroscopic features responsible for global behavior by extracting the relevant information from the unresolved microstructures \cite{BarCarHacHop04, ConDol18, KNMPPB2022, KNPPB2023, BKNPP23}.

The direct method in the calculus of variations links the limit behavior of a minimizing sequence of $I$ to the minimizer of the function when $W$ is replaced by its quasiconvex envelope $W^\textrm{qc}$ \cite{Mue99,Dac08,Rou20}. 
Since the quasiconvex envelope $W^\textrm{qc}$ is rarely known explicitly or even approximately, lower and upper bounds of $W^\textrm{qc}$ and their numerical approximation are of interest in computational nonlinear elasticity. 
While the rank-one convex envelope provides an upper bound, a lower bound of $W^\qc$ is provided by the polyconvex envelope $W^\pc$. 
The notion of quasiconvexity requires growth conditions for the existence of a minimizer, which reflect a rather unphysical behavior in the compression regime ($\det(F) \to 0$). 
Such growth conditions are not necessary under polyconvexity \cite{Bal76, Bal77} and, therefore, polyconvexity is favorable for application in nonlinear elasticity \cite{Bal02}.
Overall, the accurate approximation of polyconvex envelopes is of great importance for the practical realization of relaxation techniques.

In the context of elasticity, the main challenge for the efficient computation of semiconvex envelopes like polyconvex ones arises from the high-dimensional nature of the problem. Even the accurate numerical representation of the original density $W$ requires a mesh of a domain in $d\times d$-dimensional space. For $d=2$, this is already challenging but just about feasible with known linear programming algorithms for approximating the polyconvex envelope \cite{Bar05}, \cite{EBG13}, \cite{BEG15}. However, even these efficient algorithms become practically infeasible for many relevant problems in $d=3$ spatial dimensions.

In this work, we will therefore abandon the generality of these methods in favor of faster algorithms by restricting ourselves to the subclass of isotropic functions $W$. These functions model a directionally independent local material response.
They can be identified with a function $\Phi\colon\R^d \rightarrow \R_{\infty}$ by means of the vector of signed singular values ${\nu(F) \in \R^{d}}$ of $F$, namely $W(F)=\Phi(\nu(F))$.
In order to utilize this dimension reduction, it becomes desirable to characterize also polyconvexity in terms of $\Phi$. Due to the high relevance of isotropic functions in nonlinear elasticity, this task was already addressed during the introduction of polyconvexity in \cite{Bal77} and a sufficient condition for the polyconvexity in terms of the singular values was derived there. However, it is a priori not necessary and, thus, not suited for the computation of the lower polyconvex envelope. Sufficient and necessary conditions are presented, for $d=2$, in \cite{Sil97, Ros98, Sil99} and, for $d=\{2,3\}$, in \cite{Mie05}. However, these conditions are not directly accessible for the numerical polyconvexification due to their implicit structure. 
For $d=2$, a characterization of finite isotropic polyconvex functions by means of merely a convex symmetric function was presented in \cite{DM06}. Finally, for $d\in \{2,3\}$, such a characterization was achieved in \cite{WiePet23}, which can handle also functions attaining infinity. This characterization corresponds to the definition of polyconvexity restricted to the set of diagonal matrices and, thus, can be considered optimal for isotropic functions.
For functions arising in linear elasticity, results on dimension reduction are derived in \cite{BKS19}.

The characterization of polyconvexity of isotropic functions presented in \cite{WiePet23} leads to a simple algorithm for approximating their polyconvex envelope. Given a mesh of a $d$-dimensional domain of signed singular values, the algorithm simply lifts the mesh to a $d$-dimensional manifold embedded in $3$- and $7$-dimensional space for $d=2$ and $d=3$, respectively. The subsequent computation of the convex hull can be easily done with various algorithms such as \texttt{Quickhull} \cite{BarDobHuh96} or the ones based on linear programming mentioned above. The computational effort is determined by the number of mesh points, which scales as in $d$-dimensional space. This dimensional reduction from $d\times d$ to $d$ dimensions makes the novel algorithms feasible for the approximation of the polyconvex envelopes even for engineering applications in three dimensions.

The paper is organized as follows.
In Section~\ref{sec:theory}, basic definitions, the analytical theory of polyconvexity as well as necessary and sufficient conditions based on \cite{WiePet23} are presented. 
Afterwards, in Section \ref{sec:svpoly}, the numerical realization of the polyconvexification approach, i.e., discretization and algorithmic treatment, are discussed.
In Section~\ref{sec:examples}, a collection of numerical experiments shows the feasibility of the algorithms even in three spatial dimensions. 
Moreover, we numerically investigate polyconvexity properties of a parameter-dependent family of exponentiated Hencky-logarithmic energy densities \cite{NefLanGhiMarSte2015} beyond existing mathematical results. 
We conclude with some remarks in Section~\ref{sec:conclusion}.

\section{Polyconvexification of isotropic functions}\label{sec:theory}
Let $d\in \{2,3\}$ and let $W \colon \R^{d \times d} \to \R_\infty \coloneqq \R \cup \{\infty\}$ be a function, which maps $d\times d$ matrices to  real scalars or infinity. We think of energy densities in functionals of the form \eqref{eq:energyFunctional}. The possible value infinity models practically  unrealizable states of the deformation gradient. 
We are interested in the polyconvex envelope $W^{\pc}$ of the function $W$. The notion of polyconvexity relies on the minors of matrices $F\in \R^{d\times d}$. Given the determinant $\det(F)$ and the adjoint $\adj(F)$ of $F$, let 
\begin{equation}\label{eq:M(F)}
	\M(F)\coloneqq \begin{cases} (F, \det(F))& \text{if } d=2,\\
	 (F, \adj(F), \det(F))&  \text{if } d=3, 
	 \end{cases}
 \end{equation}
denote the minors of $F$. Since, for $d = 2$, we identify $\R^{2 \times 2} \times \R \cong \R^5$ and, for ${d = 3}$, we identify $\R^{3 \times 3} \times \R^{3 \times 3} \times \R \cong \R^{19}$, $\M(F)$ is considered as a vector of dimension ${K_d \coloneqq \sum_{l =1 }^d \binom{d}{l}^{2}}$.
A function ${V\colon\R^{d \times d} \to \R_{\infty}}$ is said to be polyconvex if there exists a convex function ${G\colon\R^{K_d} \to \R_{\infty}}$ such that for all $F\in \R^{d\times d}$,  
\begin{align} \label{eq:V=GM}
	V(F) = G(\M(F)).
\end{align}
The polyconvex envelope $W^{\pc}\colon \R^{d \times d} \to \R_\infty$ of $W$, defined by the pointwise supremum
\begin{align} \label{eq:Wpc}
	W^{\pc}(F) \coloneqq \sup \left\{V(F) \mid V \colon \R^{K_d} \to \R_{\infty} \text{ polyconvex},  V \leq W\right\},
\end{align}
is the largest polyconvex function below $W$. It 
is equivalently characterized by
\begin{align*}
	W^{\pc}(F) = \sup \left\{(G \circ \M) (F) \mid G: \R^{K_d} \to \R_{\infty} \text{ convex},  G \circ \M \leq W\right\}.
\end{align*}
Given the (non-convex) function $H \colon \R^{K_d} \to \R$ with
\begin{align} \label{eq:H}
	H(x)= 
	\begin{cases}
		W(F) & \text{ if } x = \M(F), \\
		\infty & \text{ else},
	\end{cases}
\end{align}
the polyconvex envelope	$W^{\pc}$ of $W$ equals the convex envelope $H^{\c}$ of $H$, i.e.,
\begin{align} \label{eq:Wpc=HcM}
	W^{\pc} (F) = H^{\c}(\M(F)).
\end{align}

Note that for finite valued functions convexity implies continuity.  
However, this is not the case for functions taking the value $\infty$.
Indeed, for the study of the functional \eqref{eq:energyFunctional}, it is important that the function $G$ in the definition of polyconvexity \eqref{eq:V=GM} is lower semicontinuous (lsc).
In this sense, we call a function $V$ lower semicontinuous polyconvex (lsc-pc) if $G$ in \eqref{eq:V=GM} is additionally lower semicontinuous. 
Accordingly, we define the lower semicontinuous polyconvex envelope $W^{\lscpc}$ of $W$ to be the largest lower semicontinuous polyconvex function below $W$. 

In this paper, we restrict ourselves to the study of polyconvexity of isotropic functions. According to \cite{Bal76}, ${W\colon\R^{d\times d}\rightarrow \R_{\infty}}$ is called \textit{objective} if $W(F) = W(R F)$ for all $F \in \R^{d \times d}$ and for all $R \in \mathcal{SO}(d)$, where $\mathcal{SO}(d)$ denotes the special orthogonal group of $d \times d$ matrices. Furthermore, $W$ is called \textit{isotropic} if $W$ is objective and $W(F) = W(Q F Q^T)$ holds for all $F$ and for all $Q\in \mathcal{O}(d)$, the group of orthogonal $d\times d$ matrices.
Therefore, $W$ is isotropic if and only if 
\begin{align*}
W(F) = W(R_1 F R_2)
\end{align*}
for all $F\in \R^{d \times d}$ and all $R_1, R_2 \in \SO(d)$.

Isotropic functions can be characterized by the signed singular values of their arguments. Given $F\in\R^{d\times d}$ with singular values $0\leq \sigma_1(F),\ldots,\sigma_d(F)\in\R^d$, the \textit{signed singular values} $\nu_1(F)=\varepsilon_1 \sigma_1(F), \ldots, \nu_d(F)=\varepsilon_d\sigma_d(F) \in \R$ of $F$ have the same absolute values as the singular values of $F$ and the signs $\varepsilon_1,\ldots,\varepsilon_d\in\{1,0,-1\}$ satisfy $$\operatorname{sign}(\nu_1\cdot\ldots\cdot\nu_d) = \varepsilon_1\cdot\ldots\cdot\varepsilon_d = \operatorname{sign}(\det(F)).$$
Note that the signed singular values are only unique up to permutations in 
\begin{align*}
	\Pid = \left\{P \diag(\varepsilon) \in \mathcal{O}(d) \mid P \in \operatorname{Perm}(d), \varepsilon \in \{-1,1\}^d, \varepsilon_1\cdot\ldots\cdot\varepsilon_d = 1 \right\},
\end{align*}
where $\diag(\bullet)$ refers to the diagonal matrix with diagonal entries given by the vector of its argument and $\operatorname{Perm}(d) \subset \{0,1\}^{d \times d}$ denotes the set of  permutation matrices.
By means of the signed singular values, we can identify the set of isotropic functions $W\colon \R^{d \times d} \to \R$ with the set of $\Pid$-invariant functions $\Phi\colon\R^d \to \R_\infty$, i.e., ${\Phi(\hat{\nu}) = \Phi(S \hat{\nu})}$ for all $\hat{\nu}\in \R^{d}$ and all $S \in \Pid$.
The identification is given by
\begin{equation}\label{eq:W=Phi}
	W(F) = \Phi(\nu(F))
\end{equation}
for all $F\in \R^{d\times d}$ and vice versa
\begin{equation}\label{eq:Phi=W}
	\Phi(\hat{\nu}) = W(\diag(\hat{\nu}))
\end{equation}
for all $\hat{\nu} \in \R^{d}$. 
Given this identification, we say that a $\Pid$-invariant function ${\Phi\colon\R^d \to \R_\infty}$ is singular value polyconvex if the corresponding $W$ defined by \eqref{eq:W=Phi} is polyconvex.
Accordingly, a $\Pi_d$-invariant function $\Phi$ is called lower semicontinuous singular value polyconvex (lsc-svpc) if $W$ is lower semicontinuous polyconvex. We define the lower semicontinuous singular value polyconvex envelope $\Phi^{\lscsvpc}\colon\R^{d} \to \R_\infty$ of $\Phi$ by
\begin{align*}
	\Phi^{\lscsvpc} (\hat{\nu}) \coloneqq \sup \{\Psi(\hat{\nu}) \mid \Psi \text{ lsc-svpc}, \Psi \leq \Phi\}.
\end{align*}
This definition is justified by the following lemma, which shows that the lower semicontinuous polyconvex envelope preserves isotropy and, therefore, naturally connects the polyconvex envelopes of $W$ and $\Phi$.
\begin{lemma} \label{thm:Wpcisotropic}
	Let $W\colon\R^{d \times d} \to \R_\infty$ be isotropic and let $\Phi\colon\R^d \to \R_\infty$ be the unique {$\Pid$-invariant} function that satisfies \eqref{eq:W=Phi} or, equivalently, \eqref{eq:Phi=W}. 
	Then, $W^{\lscpc}$ is isotropic and identified with $\Phi^{\lscsvpc}$, i.e.,
	\begin{align*} 
		W^{\lscpc}(F) = \Phi^{\lscsvpc}(\nu(F))\quad\text{and}\quad \Phi^{\lscsvpc}(\hat{\nu}) = W^{\lscpc}(\diag(\hat{\nu}))
	\end{align*}
	hold for all $F \in \R^{d \times d}$ and for all $\hat{\nu} \in \R^d$.
\end{lemma}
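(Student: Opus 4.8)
\emph{Proof strategy.} The plan is to first show that $W^{\lscpc}$ inherits isotropy, which permits writing $W^{\lscpc}(F)=\Psi(\nu(F))$ for a unique $\Pi_d$-invariant $\Psi$, and then to identify $\Psi$ with $\Phi^{\lscsvpc}$ by comparing competitors on both sides. The first ingredient is the elementary fact that two-sided rotations act linearly on the minors: for every $(R_1,R_2)\in\SO(d)^2$ there is an invertible linear map $L=L_{R_1,R_2}\colon\R^{K_d}\to\R^{K_d}$ with $\M(R_1FR_2)=L\,\M(F)$ for all $F\in\R^{d\times d}$. Indeed, $F\mapsto R_1FR_2$ is linear, $\det(R_1FR_2)=\det F$, and for $d=3$ one has $\adj(R_1FR_2)=\adj(R_2)\adj(F)\adj(R_1)=R_2^T\adj(F)R_1^T$, which is linear in $\adj(F)$; collecting the blocks defines $L$, whose inverse is $L_{R_1^T,R_2^T}$. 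Hence, if $V=G\circ\M$ is polyconvex then so is $F\mapsto V(R_1FR_2)=(G\circ L)\circ\M(F)$, and the same holds for lower semicontinuous polyconvexity because $G\circ L$ is lower semicontinuous whenever $G$ is.

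\emph{Isotropy of $W^{\lscpc}$.} Fix $R_1,R_2\in\SO(d)$. If $V$ is lsc-pc with $V\le W$, then $V(R_1\cdot R_2)$ is lsc-pc by the previous step, and $V(R_1\cdot R_2)\le W(R_1\cdot R_2)=W$ by isotropy of $W$, so $V(R_1\cdot R_2)$ is admissible in the definition of $W^{\lscpc}$; therefore $W^{\lscpc}(F)\ge V(R_1FR_2)$ for every $F$. Taking the supremum over all such $V$ and invoking the definition of $W^{\lscpc}$ at the point $R_1FR_2$ yields $W^{\lscpc}(F)\ge W^{\lscpc}(R_1FR_2)$, and applying this with $(F,R_1,R_2)$ replaced by $(R_1FR_2,R_1^T,R_2^T)$ gives the opposite inequality. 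Thus $W^{\lscpc}$ is isotropic and, by the identification recalled before the lemma, equals $\Psi(\nu(\cdot))$ for a unique $\Pi_d$-invariant $\Psi\colon\R^d\to\R_\infty$ satisfying $\Psi(\hat\nu)=W^{\lscpc}(\diag\hat\nu)$; it remains to show $\Psi=\Phi^{\lscsvpc}$.

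\emph{Matching the envelopes.} For the inequality $\Psi\le\Phi^{\lscsvpc}$, I would first note that $W^{\lscpc}$ is itself lsc-polyconvex: every lsc-pc $V\le W$ is of the form $G\circ\M$ with $G$ convex and lsc, precomposition with the fixed map $\M$ commutes with pointwise suprema, and a supremum of convex lsc functions is convex and lsc, so $W^{\lscpc}=G^{\ast}\circ\M$ with $G^{\ast}\coloneqq\sup\{G\mid G\colon\R^{K_d}\to\R_\infty\text{ convex lsc},\ G\circ\M\le W\}$ convex and lsc. Hence $\Psi$ is lsc-svpc, and since $\Psi(\hat\nu)=W^{\lscpc}(\diag\hat\nu)\le W(\diag\hat\nu)=\Phi(\hat\nu)$, it is admissible in the definition of $\Phi^{\lscsvpc}$, giving $\Psi\le\Phi^{\lscsvpc}$. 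For the reverse inequality, let $\Xi$ be any lsc-svpc function with $\Xi\le\Phi$; then $V_\Xi\coloneqq\Xi(\nu(\cdot))$ (well defined by $\Pi_d$-invariance of $\Xi$) is lsc-pc by definition, and $V_\Xi\le\Phi(\nu(\cdot))=W$ by \eqref{eq:W=Phi}, so $V_\Xi\le W^{\lscpc}$; evaluating at diagonal matrices gives $\Xi(\hat\nu)=V_\Xi(\diag\hat\nu)\le W^{\lscpc}(\diag\hat\nu)=\Psi(\hat\nu)$, and the supremum over $\Xi$ yields $\Phi^{\lscsvpc}\le\Psi$. Combining the two inequalities gives $\Psi=\Phi^{\lscsvpc}$, which is exactly the two displayed identities.

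\emph{Main obstacle.} The only genuinely delicate points are, in the first step, pinning down the linear map $L_{R_1,R_2}$ on the adjugate block together with its invertibility for $d=3$, and, in the last step, checking that $W^{\lscpc}$ is not merely lower semicontinuous but genuinely lsc-polyconvex, which is precisely what allows $\Psi$ to serve as an admissible competitor for $\Phi^{\lscsvpc}$; all remaining manipulations are routine.
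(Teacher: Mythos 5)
Your proof is correct, and its underlying mechanism overlaps with the paper's: both hinge on the observation that precomposition $F \mapsto V(R_1 F R_2)$ with $R_1,R_2 \in \SO(d)$ preserves lower semicontinuous polyconvexity because the pair $(R_1,R_2)$ acts \emph{linearly} on the vector of minors. But the routes diverge. The paper symmetrizes each individual competitor, setting $V_{\textrm{iso}} \coloneqq \sup_{R_1,R_2 \in \SO(d)} V(R_1\cdot R_2)$, checks $V \leq V_{\textrm{iso}} \leq W$, and then rewrites the defining supremum for $W^{\lscpc}$ directly as a supremum over isotropic competitors, which maps bijectively onto the supremum defining $\Phi^{\lscsvpc}$; at no point does it need to show that $W^{\lscpc}$ is itself lsc-polyconvex. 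You instead prove isotropy of $W^{\lscpc}$ by a direct two-sided inequality, then separately establish that $W^{\lscpc}$ is genuinely lsc-polyconvex (so that the induced $\Pi_d$-invariant $\Psi$ is an admissible competitor for $\Phi^{\lscsvpc}$), and close with $\Psi \leq \Phi^{\lscsvpc}$ and $\Phi^{\lscsvpc} \leq \Psi$. This is longer and carries an extra intermediate claim, but it has one genuine advantage: you make explicit the invertible linear map $L_{R_1,R_2}$ on $\R^{K_d}$ (including the adjugate block for $d=3$), which the paper leaves implicit behind the assertion that ``convexity and lower semicontinuity are preserved for the supremum''; that assertion only covers the supremum itself, and your spelled-out $L$ is precisely what makes each term $V(R_1\cdot R_2)$ lsc-pc in the first place. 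The step showing $W^{\lscpc} = G^{\ast} \circ \M$ with $G^{\ast}$ the supremum of all admissible convex lsc $G$ is also sound: the supremum of convex lsc functions is convex lsc, and precomposition with the fixed map $\M$ commutes with pointwise suprema.
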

\begin{proof}
	For every lower semicontinuous polyconvex, not necessarily isotropic $V$ with $V \leq W$, there exists an isotropic lower semicontinuous polyconvex function $V_{\textrm{iso}}$ with $V \leq V_{\textrm{iso}}\leq W$, namely, $V_{\textrm{iso}}(F) \coloneqq \sup_{R_1, R_2 \in \mathcal{SO}(d)} V(R_1 F R_2)$ for all $F \in \R^{d \times d}$.
	Using $V \leq W$ and the isotropy of $W$, we observe
	\begin{align*}
		V_{\textrm{iso}}(F)= \sup_{R_1, R_2 \in \mathcal{SO}(d)} V(R_1 F R_2) \leq \sup_{R_1, R_2 \in \mathcal{SO}(d)} W(R_1 F R_2) = W(F)
	\end{align*} 
	for all $F \in \R^{d \times d}$, i.e.~$V_{\textrm{iso}} \leq W$. By its definition, $V_{\textrm{iso}}$ is isotropic and lower semicontinuous polyconvex since convexity and lower semicontinuity are preserved for the supremum.
	
	Consequently, it suffices to consider the supremum in \eqref{eq:Wpc} over isotropic lower semicontinuous polyconvex functions $V_{\textrm{iso}}$, which can be identified by $\Pid$-invariant functions, i.e.,
	\begin{align*}
		W^{\lscpc}(F) & = \sup\{V(F) \mid V \text{ lsc-pc}, V \leq W\} \\
		& = \sup\{V_{\textrm{iso}}(F) \mid V_{\textrm{iso}} \text{ lsc-pc and isotropic}, V_{\textrm{iso}} \leq W\} \\
		& = \sup\{\Psi(\nu(F)) \mid \Psi \text{ lsc-svpc}, \Psi \circ \nu \leq \Phi \circ \nu\} \\
		& = \sup\{\Psi(\nu(F)) \mid \Psi \text{ lsc-svpc}, \Psi\leq \Phi\} \\
		& = \Phi^{\lscsvpc} (\nu(F)),
	\end{align*}
	where we employ the surjectivity of $\nu : \R^{d\times d} \to \R^d$.
	In particular, this shows that $W^{\lscpc}$ is isotropic.
\end{proof}

In analogy to the original definition of polyconvexity based on the minors $\M$ in \eqref{eq:M(F)}, we define a lifting of the arguments to a higher dimensional space. For $d \in \{2,3\}$, we define $k_d \coloneqq 2^d-1$ and the mapping ${\m \colon\R^{d} \to \R^{k_d}}$ by
\begin{align*}
	\m(\hat{\nu}) = 
	\begin{cases}
		(\hat{\nu}_1, \hat{\nu}_2, \hat{\nu}_1 \hat{\nu}_2) & \text{if  } d = 2, \\
		(\hat{\nu}_1, \hat{\nu}_2, \hat{\nu}_3, \hat{\nu}_2 \hat{\nu}_3, \hat{\nu}_3 \hat{\nu}_1, \hat{\nu}_1 \hat{\nu}_2, \hat{\nu}_1 \hat{\nu}_2 \hat{\nu}_3) & \text{if  } d = 3.
	\end{cases}
\end{align*}
We will refer to $\m(\hat{\nu})$ as the vector of minors of $\hat{\nu}\in\R^d$. 
According to \cite{WiePet23}, the lower semicontinuous singular polyconvexity of a $\Pid$-invariant function $\Phi\colon\R^d \to \R_\infty$ can be characterized by the existence of a convex function acting on the ambient space $\R^{k_d}$ of the image $\mathfrak{m}_d = \{\m(\hat{\nu}) \mid \hat{\nu} \in \R^d \}$ of the lifting $\m$.

\begin{theorem}[{\cite[Theorem 1.1]{WiePet23}}] \label{thm:SingConj}
	Let $d \in \{2, 3\}$ and $\Phi\colon\R^d \to \R_{\infty}$ be $\Pid$-invariant.
	Then, $\Phi$ is lower semicontinuous singular value polyconvex if and only if there exists a lower semicontinuous and convex function $g: \R^{k_d} \to \R_\infty$ such that
	\begin{align}\label{eq:Phi=gm}
		\Phi = g \circ \m.
	\end{align}
\end{theorem}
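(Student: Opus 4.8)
The plan is to prove the two implications separately; the forward one (lower semicontinuous singular value polyconvexity $\Rightarrow$ existence of $g$) is immediate, while the converse carries all of the content. For the forward direction, suppose $\Phi$ is lsc-svpc, i.e.\ the isotropic function $W$ from \eqref{eq:W=Phi} is lower semicontinuous polyconvex, and fix a lower semicontinuous convex $G\colon\R^{K_d}\to\R_\infty$ with $W=G\circ\M$. A direct computation shows that the minors of a diagonal matrix are, entrywise, precisely the components of the vector of minors of its diagonal, that is, $\M(\diag(\hat\nu))=\iota(\m(\hat\nu))$ for all $\hat\nu\in\R^d$, where $\iota\colon\R^{k_d}\to\R^{K_d}$ is the linear injection that places the $k_d$ entries of $\m(\hat\nu)$ into the diagonal slots of $F$, of $\adj F$ (only when $d=3$), and into the determinant slot, and sets all remaining entries to zero. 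Consequently $g:=G\circ\iota$ is convex (precomposition with a linear map), lower semicontinuous (precomposition with a continuous map), and $g\circ\m=G\circ\M\circ\diag=W\circ\diag=\Phi$ by \eqref{eq:Phi=W}.

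For the converse, let a lower semicontinuous convex $g\colon\R^{k_d}\to\R_\infty$ with $\Phi=g\circ\m$ be given and put $W:=\Phi\circ\nu$. One has to exhibit a lower semicontinuous convex $G\colon\R^{K_d}\to\R_\infty$ with $W=G\circ\M$; since $W^{\lscpc}\le W$ always holds, it is equivalent to show $W\le W^{\lscpc}$. By the representation \eqref{eq:Wpc=HcM} of the polyconvex envelope and its lower semicontinuous variant, this in turn reduces to the inequality $\sum_i\lambda_i W(F_i)\ge W(F)$ for all $F$ and all convex combinations with $\sum_i\lambda_i\M(F_i)=\M(F)$. I would first invoke the isotropy of $W$ to reduce to $F=\diag(\nu(F))$: if $F=R_1\diag(\nu(F))R_2$ is a signed singular value decomposition with $R_1,R_2\in\SO(d)$, then replacing each $F_i$ by $R_1^{-1}F_iR_2^{-1}$ leaves the values $W(F_i)$ unchanged and, because every minor block of $\M$ transforms linearly under the $\SO(d)\times\SO(d)$ action, also preserves the constraint. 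Next I would normalize $g$ so that it is invariant under the induced linear action of $\Pid$ on $\R^{k_d}$, by replacing it with the pointwise supremum over that finite group, which preserves convexity, lower semicontinuity, and the identity $g\circ\m=\Phi$.

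The remaining and, I expect, genuinely hard point is that the ``small'' lift $\m\circ\nu$ does not factor linearly through the ``large'' lift $\M$: the individual signed singular values $\nu_k(F)$ are neither polynomial nor even single-valued functions of $F$, so $\m(\nu(F))$ cannot simply be substituted as a linear image of $\M(F)$. The resolution is to recover the relevant $\Pid$-invariant combinations of $\m(\nu(F))$ from the minors of $F$ through honestly convex expressions and then to compose a monotone--convex modification of the normalized $g$ with those. For $d=2$ the key identities are $\|(F_{11}\pm F_{22},\,F_{12}\mp F_{21})\|_2=|\nu_1(F)\pm\nu_2(F)|$ together with $\det F=\nu_1(F)\nu_2(F)$; the left-hand sides are a Euclidean norm of a linear image of $F$ (hence convex as a function of $\M(F)$) and an affine function of $\M(F)$, respectively, and one sets $G(F,\delta):=g_0\bigl(\|(F_{11}+F_{22},F_{12}-F_{21})\|_2,\|(F_{11}-F_{22},F_{12}+F_{21})\|_2,\delta\bigr)$ for a lower semicontinuous convex $g_0$ that is nondecreasing in its first two arguments, which makes $G$ convex on all of $\R^{K_d}$ with $G\circ\M=W$. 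For $d=3$ an analogous but longer list of convex minors-identities for the signed singular values of both $F$ and $\adj F$ is required. The substantive obstacle is thus to prove that such a $g_0$ — convex, lower semicontinuous, nondecreasing in the appropriate variables, and reproducing $\Phi$ on the relevant surface — exists whenever $\Phi=g\circ\m$ for some convex lsc $g$, i.e.\ that the change of parametrization from $\m$ to these convex minors-functions respects convexity after the symmetrization and monotonization above; this is exactly where the structure theory of signed singular values from \cite{WiePet23} enters, and it is also where the bookkeeping for functions attaining $+\infty$ and the separate treatment of $d=2$ and $d=3$ must be carried out.
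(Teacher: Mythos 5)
The paper does not prove this statement; it is cited verbatim from \cite{WiePet23} as Theorem~1.1, so there is no in-paper argument to compare against. Your forward implication is correct and clean: the observation that $\M(\diag(\hat\nu))$ is a linear injection of $\m(\hat\nu)$ into $\R^{K_d}$ (padding zeros into the off-diagonal slots of $F$ and, for $d=3$, of $\adj F$) immediately yields $g=G\circ\iota$ with the right properties, and this is a standard and essentially unavoidable step.

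For the converse, however, you have identified the hard direction but not closed it, and the route you sketch is likely to fail. Your plan is to pass from the $\Pid$-invariant convex $g$ of $\m(\hat\nu)$ to a lower semicontinuous convex function $g_0$, \emph{nondecreasing in its first arguments}, of the combinations $|\nu_1\pm\nu_2|$ (and the corresponding quantities in $d=3$) and then to compose with the norm-of-linear-image expressions in $\M(F)$. This is precisely Ball's classical sufficient condition for isotropic polyconvexity, and that condition is known to be sufficient but \emph{not} necessary: there are isotropic polyconvex $W$ that do not admit such a monotone convex $g_0$. A major point of \cite{WiePet23} is to replace Ball's criterion with one that is both necessary and sufficient, namely convexity of $g$ in the variables $\m(\hat\nu)$. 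If one could always manufacture a monotone convex $g_0$ from a $\Pid$-invariant convex $g$ of $\m$, Ball's condition would already have been necessary and the theorem would have been trivial; the existence of counterexamples to necessity of Ball's condition shows this intermediate conversion cannot be carried out in general. The actual argument therefore must work directly with $g$ on the (nonconvex) manifold $\mathfrak{m}_d=\m(\R^d)$ and extend it to a lower semicontinuous convex function on $\R^{K_d}$ along the full minors map $\M$ without detouring through Ball's absolute-value reparametrization; that extension step --- including the bookkeeping needed to cover the $+\infty$ values and the genuinely separate combinatorics of $d=2$ versus $d=3$ --- is exactly what you have left open and what \cite{WiePet23} supplies.
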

Note that, on one hand, the assumption on $\Phi$ to be $\Pid$-invariant is already ensured if $\Phi$ is lower semicontinuous singular value polyconvex. 
On the other hand, it can be ensured if \eqref{eq:Phi=gm} holds and $g(\m(\hat{\nu})) = g(\m(S \hat{\nu}))$ for all $S \in \Pid$ and $\hat{\nu} \in \R^d$.

Similar to \eqref{eq:Wpc=HcM}, we can construct the lower semicontinuous singular value polyconvex envelope by lifting the signed singular values onto the manifold $\mathfrak{m}_d$ of their minors.
\begin{proposition} \label{thm:main}
	Let $\Phi \colon \R^d \to \R_\infty$ be $\Pid$-invariant. Define the mapping
	\begin{align} \label{def:h}
		h \colon \R^{k_d} \to \R_\infty,\quad x \mapsto
		\begin{cases}
			\Phi(\hat{\nu}) &\textrm{if } x = \m(\hat{\nu}), \\
			\infty &\textrm{else}
		\end{cases}
	\end{align}
and denote by $h^{\lscc}$ its lower semicontinuous convex envelope. 
Then 
\begin{align} \label{eq:Phipc=hc}
	\Phi^{\lscsvpc} = h^{\lscc}\circ\m.
\end{align}
\end{proposition}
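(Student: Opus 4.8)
The plan is to follow the pattern of the identity $W^{\pc} = H^{\c}\circ\M$ in \eqref{eq:Wpc=HcM}, using Theorem~\ref{thm:SingConj} to replace ``lower semicontinuous singular value polyconvex'' by ``of the form $g\circ\m$ with $g$ lower semicontinuous and convex''. Since the first $d$ components of $\m(\hat{\nu})$ are $\hat{\nu}_1,\dots,\hat{\nu}_d$, the lifting $\m$ is injective, so $h$ in \eqref{def:h} is well defined and $h\circ\m = \Phi$ on all of $\R^d$; moreover $h^{\lscc}$ is the largest lower semicontinuous convex function below $h$ (equivalently, the supremum of all of them, which is again lower semicontinuous and convex). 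In particular $h^{\lscc}\le h$ gives the easy inequality $h^{\lscc}\circ\m\le h\circ\m = \Phi$, so it remains to show that $h^{\lscc}\circ\m$ is lsc-svpc and that it dominates every lsc-svpc function below $\Phi$.

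For the first point I would record that the $\Pid$-action on $\R^d$ lifts to $\R^{k_d}$: for each $S\in\Pid$ there is an invertible linear map $\tilde S\colon\R^{k_d}\to\R^{k_d}$, namely a signed permutation of the coordinates, with $\m(S\hat{\nu}) = \tilde S\,\m(\hat{\nu})$ for all $\hat{\nu}$, and $S\mapsto\tilde S$ is a homomorphism. Since $\tilde S$ is a bijection of $\R^{k_d}$ carrying $\mathfrak{m}_d$ onto itself and $\Phi$ is $\Pid$-invariant, $h$ is $\tilde S$-invariant: on $\mathfrak{m}_d$ one has $h(\tilde S\m(\hat{\nu})) = h(\m(S\hat{\nu})) = \Phi(S\hat{\nu}) = \Phi(\hat{\nu}) = h(\m(\hat{\nu}))$, while off $\mathfrak{m}_d$ both sides are $\infty$. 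Hence $h^{\lscc}$ is $\tilde S$-invariant as well: if $g$ is lower semicontinuous convex with $g\le h$, then $g\circ\tilde S$ is lower semicontinuous convex with $g\circ\tilde S\le h\circ\tilde S = h$, so $g\circ\tilde S\le h^{\lscc}$; taking the supremum over such $g$ and then replacing $\tilde S$ by $\tilde S^{-1}$ yields $h^{\lscc}\circ\tilde S = h^{\lscc}$. Therefore $h^{\lscc}(\m(S\hat{\nu})) = h^{\lscc}(\m(\hat{\nu}))$, i.e.\ $h^{\lscc}\circ\m$ is $\Pid$-invariant, and since $h^{\lscc}$ is lower semicontinuous and convex, Theorem~\ref{thm:SingConj} together with the remark following it shows $h^{\lscc}\circ\m$ is lsc-svpc.

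For maximality, let $\Psi$ be lsc-svpc with $\Psi\le\Phi$. Then $\Psi$ is $\Pid$-invariant, so by Theorem~\ref{thm:SingConj} there is a lower semicontinuous convex $g\colon\R^{k_d}\to\R_\infty$ with $\Psi = g\circ\m$. From $g\circ\m = \Psi\le\Phi = h\circ\m$ we get $g\le h$ on $\mathfrak{m}_d$, and trivially $g\le h=\infty$ off $\mathfrak{m}_d$, hence $g\le h$ everywhere and thus $g\le h^{\lscc}$ by maximality of the latter. This gives $\Psi = g\circ\m\le h^{\lscc}\circ\m$. Combining this with the previous paragraph, $h^{\lscc}\circ\m$ is an lsc-svpc function below $\Phi$ that dominates every such function, which is exactly the assertion $\Phi^{\lscsvpc} = h^{\lscc}\circ\m$.

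The bookkeeping in the first and third paragraphs is routine. The one step that really needs an argument, and which I expect to be the main (if modest) obstacle, is the $\Pid$-invariance of $h^{\lscc}\circ\m$ required before Theorem~\ref{thm:SingConj} can be applied in the ``if'' direction; it rests on the elementary observation that the $\Pid$-action lifts to a \emph{linear} action on $\R^{k_d}$ leaving $h$ — and hence its lower semicontinuous convex envelope — invariant.
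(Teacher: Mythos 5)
Your proof is correct and follows essentially the same route as the paper's: translate via Theorem~\ref{thm:SingConj} between lsc-svpc functions below $\Phi$ and lower semicontinuous convex functions below $h$, using that the lifted linear $\Pi_d$-action on $\R^{k_d}$ leaves $h$ invariant. The only presentational difference is that you prove the $\tilde S$-invariance of $h^{\lscc}$ directly and then invoke Theorem~\ref{thm:SingConj}, whereas the paper constructs an explicit symmetrization $g_{\mathrm{sym}}(x)=\max_{S}g(\tilde S x)$ of an arbitrary competitor $g$ to argue the symmetry constraint in the supremum is redundant; both are phrasings of the same fact that the lower semicontinuous convex envelope of the $\tilde S$-invariant $h$ is itself $\tilde S$-invariant.
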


\begin{proof}
By means of Theorem \ref{thm:SingConj}, we obtain 
\begin{equation*} \label{eq:lsc-svpc-envelope}
	\begin{aligned}
		\Phi^{\lscsvpc}(\hat{\nu})&=\sup\{\Psi(\hat{\nu}) \mid \Psi \text{ lsc-svpc}, \Psi\leq\Phi\} \\
		&=\sup\left\{g_{\textrm{sym}}(\m(\hat{\nu}))\; \bigg\vert \begin{array}{l}
			g_{\textrm{sym}}\text{ lsc-c},\;g_{\textrm{sym}}\leq h,\\
			\forall S \in \Pid:   g_{\textrm{sym}} \circ \m =  g_{\textrm{sym}} \circ \m \circ S
		\end{array}\right\}.
	\end{aligned}
\end{equation*}
Actually, the symmetry assumption on the right-hand side is redundant. For any lower semicontinuous and convex function $g\leq h$, define the function $g_{\textrm{sym}}$ with ${g_{\textrm{sym}}\circ \m = g_{\textrm{sym}}\circ\m\circ S}$ for all $S \in \Pid$ by 
\begin{align*}
	g_{\textrm{sym}}(x) \coloneqq \max_{S \in \Pi_d} \begin{cases} g(S(x_1, x_2)^\top, x_3)& \text{if }d=2,\\g(S(x_1, x_2, x_3)^\top, S(x_4, x_5, x_6)^\top, x_7)& \text{if }d=3.\end{cases}
\end{align*}
By construction, $g_{\textrm{sym}}$ is lower semicontinuous and convex. Moreover, the $\Pi_d$-invariance of $h$ shows that for any $\hat{\nu}\in\R^d$, 
$$g_{\textrm{sym}}(\hat{\nu}) = g(S_{\hat{\nu}} \hat{\nu}) \leq h(S_{\hat{\nu}}\hat{\nu})= h(\hat{\nu})$$
holds with $S_{\hat{\nu}} = \operatorname{argmax}_{S\in \Pi_d} g(S \hat{\nu})$. It follows that $g \leq g_{\textrm{sym}}\leq h$ and, altogether, the claimed assertion holds, 
\begin{align*}
	\Phi^{\lscsvpc}(\hat{\nu}) &= \sup\{g(\m(\hat{\nu})) \mid g\text{ lsc-c},\;g\leq h\} = h^{\lscc} (\m(\hat{\nu})).\qedhere
\end{align*}
\end{proof}

\section{Computational signed singular value polyconvexification}\label{sec:svpoly}
The polyconvexity of a general function $W$ acting on $d\times d$-matrices via the definition \eqref{eq:Wpc=HcM} requires, in the absence of structural properties such as isotropy, the computation of the convex envelope of a scalar function $H$ acting essentially on a $d^2$-dimensional manifold in $5$- and $19$-dimensional space for $d=2$ and $d=3$, respectively. So the suitable representation of $W$ on any computational mesh already suffers severely from the high dimension, the actual computational convexification even more, no matter which algorithm is used. 

This often prohibitively high computational cost can be reduced considerably under the structural assumption of isotropy. In this case, Lemma~\ref{thm:Wpcisotropic} shows that it suffices to compute the lower semicontinuous singular value polyconvex envelope of a $\Pid$-invariant function $\Phi$ acting on the signed singular values.
By Proposition~\ref{thm:main}, this can be done by computing the convex envelope $h^{\c}$ of the function $h$ given in \eqref{def:h}.
The polyconvexity of an isotropic function of dimension $d$, thus, requires only the convexification of $h$ acting essentially on a $d$-dimensional manifold in $3$- and $7$-dimensional space for $d=2$ and $d=3$, respectively. This drastic reduction in dimensionality makes the polyconvexification problem feasible even for $d=3$ in many cases.

\subsection{Sketch of the algorithm}
Let $W\colon\R^{d \times d} \to \R_\infty$ be isotropic, let $\Phi\colon\R^d \to \R_\infty$ be the corresponding $\Pid$-invariant function satisfying the conditions \eqref{eq:W=Phi}--\eqref{eq:Phi=W} and let $\hat{F}\in\R^{d\times d}$ have signed singular values $\hat{\nu} = \nu(\hat{F}) \in \R^{d}$. This section presents an abstract algorithm for approximating the lower semicontinuous polyconvex envelope $\Phi^{\lscpc}$ evaluated at the point $\hat{\nu}$, thus providing an approximation of the lower semicontinuous polyconvex envelope $W^{\lscpc}$ in $\hat{F}$ via $W^{\lscpc}(\hat{F})= \Phi^{\lscsvpc}(\hat{\nu})$. 
Since our algorithmic realization already ensures the lower semicontinuity, we abuse the notation by identifying $\Phi^{\pc}$ with $\Phi^{\lscsvpc}$ and $W^{\pc}$ with $W^{\lscpc}$ in the following.

As with any practical algorithm for computing (semi-)convex envelopes, we assume that the set of non-polyconvexity of $\Phi$, i.e., the support of $\Phi-\Phi^{\pc}$, is bounded. Without loss of generality, we assume that $$\supp (\Phi-\Phi^{\pc})\subset [-r,r]^d\eqqcolon B(r)$$ for some bounding box $B(r)$ with radius $r>0$. The bounding box is discretized by some grid. Although more general choices are possible, we restrict ourselves to equidistant lattices of the form 
$$\Sigma_\delta = \delta\,\Z^{d} \cap [-r,r]^d$$ with lattice size $\delta>0$. By $N_\delta:=(2\lfloor r\delta^{-1}\rfloor+1)^d$, we denote the total number of lattice points.

Given the function $\Phi$, the point $\hat\nu$, at which the polyconvex envelope is to be approximated, and the discretization parameters $\delta$ and $r$, the \emph{signed singular value polyconvexification} outlined in Algorithm~\ref{alg:SVP} consists of four main steps. 
\renewcommand{\algorithmicrequire}{\textbf{Input:}}
\renewcommand{\algorithmicensure}{\textbf{Output:}}
\begin{algorithm}
	\begin{algorithmic}[1]				
		\Require{$\Phi, \hat{\nu}, \delta, r$}
		\State{$\Sigma_\delta \coloneqq \delta\, \Z \cap B(r)$ \hfill(generate lattice in bounding box)}
		\State{$X_\delta \coloneqq \m(\Sigma_\delta)$, $h_\delta \coloneqq \Phi(\Sigma_\delta)$ \hfill(evaluate  minors and function)}\label{alg:line:lift}
		\State{$h^{\c}_\delta$ = \texttt{convexify}$([X_\delta, h_\delta])$ \hfill(approximate convex envelope of $h$)}\label{alg:line:convexify}
		\State{$\Phi^{\pc}_\delta(\hat{\nu})$ = \texttt{interpolate}$(h^{\c}_\delta, \m(\hat{\nu}))$ \hfill(evaluate approximate polyconvex envelope at $\hat{\nu}$)}\label{alg:line:interpolate}
		\Ensure{$\Phi^{\pc}_\delta(\hat{\nu})$}
	\end{algorithmic}
	\caption{Signed singular value polyconvexification}
	\label{alg:SVP}
\end{algorithm}

\emph{Step~$1$} of Algorithm~\ref{alg:SVP} represents the generation of the lattice $\Sigma_\delta$ introduced above. In an actual implementation, $\Sigma_\delta$ can be represented by an $N_\delta\times d$ matrix, where the rows contain the coordinates of the lattice points and induce a natural enumeration of the lattice points.

\emph{Step~$2$} of Algorithm~\ref{alg:SVP} lifts the $N_\delta$ lattice points $\Sigma_\delta$ to the points $X_\delta\coloneqq\m(\Sigma_\delta)$ on the manifold $\mathfrak{m}_d$. For $d=2$ and specific choices of $r$ and $\delta$, the resulting points are visualized in Figure~\ref{fig:SignedSingularValueDiscretization}. The lifted points form an $N_\delta\times k_d$ matrix in the implementation. In addition, the function $\Phi$ needs to be evaluated in the lattice points $\Sigma_\delta$, yielding $h_\delta=\Phi(\Sigma_\delta)$, an $N_\delta$-dimensional row vector in an implementation. At this point, possible infinite values of $\Phi$ could (and should in practice) be eliminated from $h_\delta$ as well as the corresponding points/rows from $X_\delta$. 
They do not contribute to the convex envelope. 

\emph{Step~$3$} of Algorithm~\ref{alg:SVP} is the computation of the convex envelope of the points $(h\circ\m)(\Sigma_\delta)$ representing the graph of $h\circ\m$. Computationally, this can be done in several ways. The most well-known algorithm is probably the \texttt{Quickhull} algorithm \cite{BarDobHuh96} originating from the field of computational geometry. This algorithm will be outlined in more detail in Section~\ref{subsec:quickhull} below. Alternatives for computing convex envelopes arising from other fields of mathematics include the computation of the convex envelope by its reformulation as an obstacle problem as done in \cite{Obe07}. Suitable schemes for the resulting nonlinear partial differential equations can be used to approximate the convex envelope. Similarly, the convex envelope of a function $h$ can be computed via a double Legendre--Fenchel conjugation, due to the relation 
\begin{align*}
	h^{\lscc} = h^{**}.
\end{align*}
Algorithmic realizations of the dual Legendre-Fenchel conjugation of a discrete function \(h_\delta\) have been presented, for example, in \cite{Luc96, Luc97, ConErnVer15}. Such approaches can benefit from parallelization, but the choice of the dual lattice is often tricky. Instead of approximating the full envelope in the bounding box, there are direct characterizations of its evaluation at $\hat{\nu}$ in terms of a linear program, as originally proposed by Bartels in the context of polyconvexity of general energy densities in \cite{Bar05}. This variant will be discussed in more detail in Section~\ref{subsec:convexlp}. 

In \emph{Step~$4$}, depending on the choice of the method \texttt{convexify} and its specific output representation, the approximate convex envelope of $h$ at the target value $\hat\nu$ must be evaluated to obtain the desired approximation of $\Phi^{\pc}(\hat\nu)$. The values $h_\delta^\c$ typically represent a continuous piecewise affine function on the convex envelope, which can be represented by a simplicial mesh as illustrated in Figure~\ref{fig:SignedSingularValueDiscretization}. The desired approximation of $\Phi^{\pc}(\hat\nu)$ can be realized by evaluating this piecewise affine function, which is achieved by the function \texttt{interpolate} in the algorithm. The practical implementation is discussed below along with the two convexification methods. 

The quality of the approximation of the polyconvex envelope by Algorithm~\ref{alg:SVP} depends on the lattice size $\delta$. It is almost independent of the choice of \texttt{convexify} whose error tolerances can typically be controlled reliably and accurately. The dependence of the error arising from the lattice discretization has already been quantified in \cite{Bar05}. In our parameter setting, the original error estimate 
\begin{align}\label{eq:errorW}
	0 \leq W^{\pc}_{\delta, r} (F) - W^{\pc}(F) \leq 2 \, c_{\mathcal{I}} \,  \delta^{1 + \alpha} \, |W|_{C^{1, \alpha}(B_{r'}(0))}
\end{align}
of \cite[Theorem 9.10]{Bar15} with some interpolation constant $c_{\mathcal{I}}>0$ can be rewritten in the isotropic setting as 
\begin{align}\label{eq:errorPHI}
	0 \leq \Phi^{\pc}_{\delta, r} (\nu) - \Phi^{\pc}(\nu) \leq 2 \, c_{\mathcal{I}} \,  \delta^{1 + \alpha} \, |\Phi|_{C^{1, \alpha}(B_{r'}(0))}.
\end{align}

\begin{figure}
	\centering	
	\includegraphics[width=.48\textwidth]{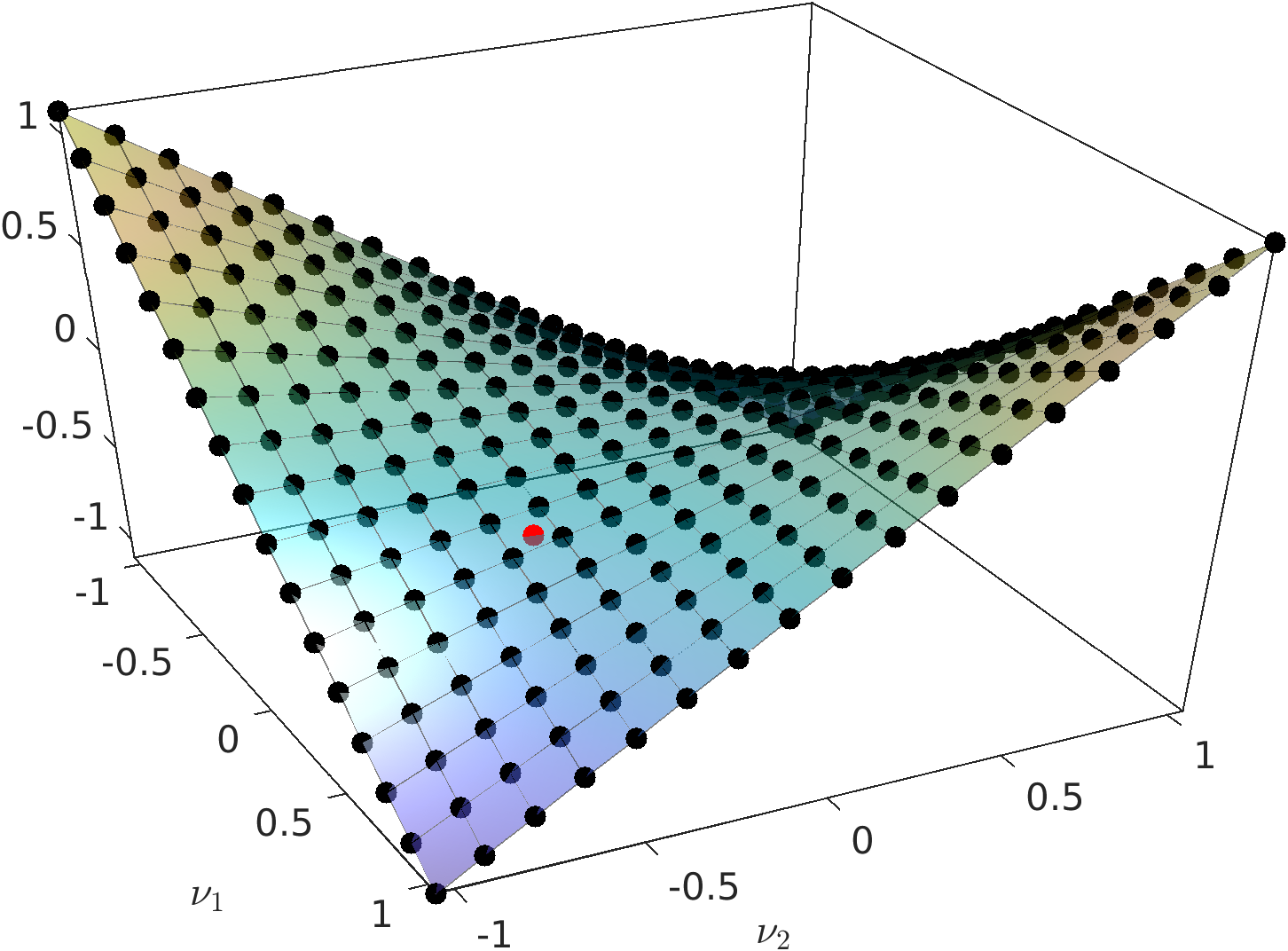}
	\hspace{0.02\textwidth}
	\includegraphics[width=.48\textwidth]{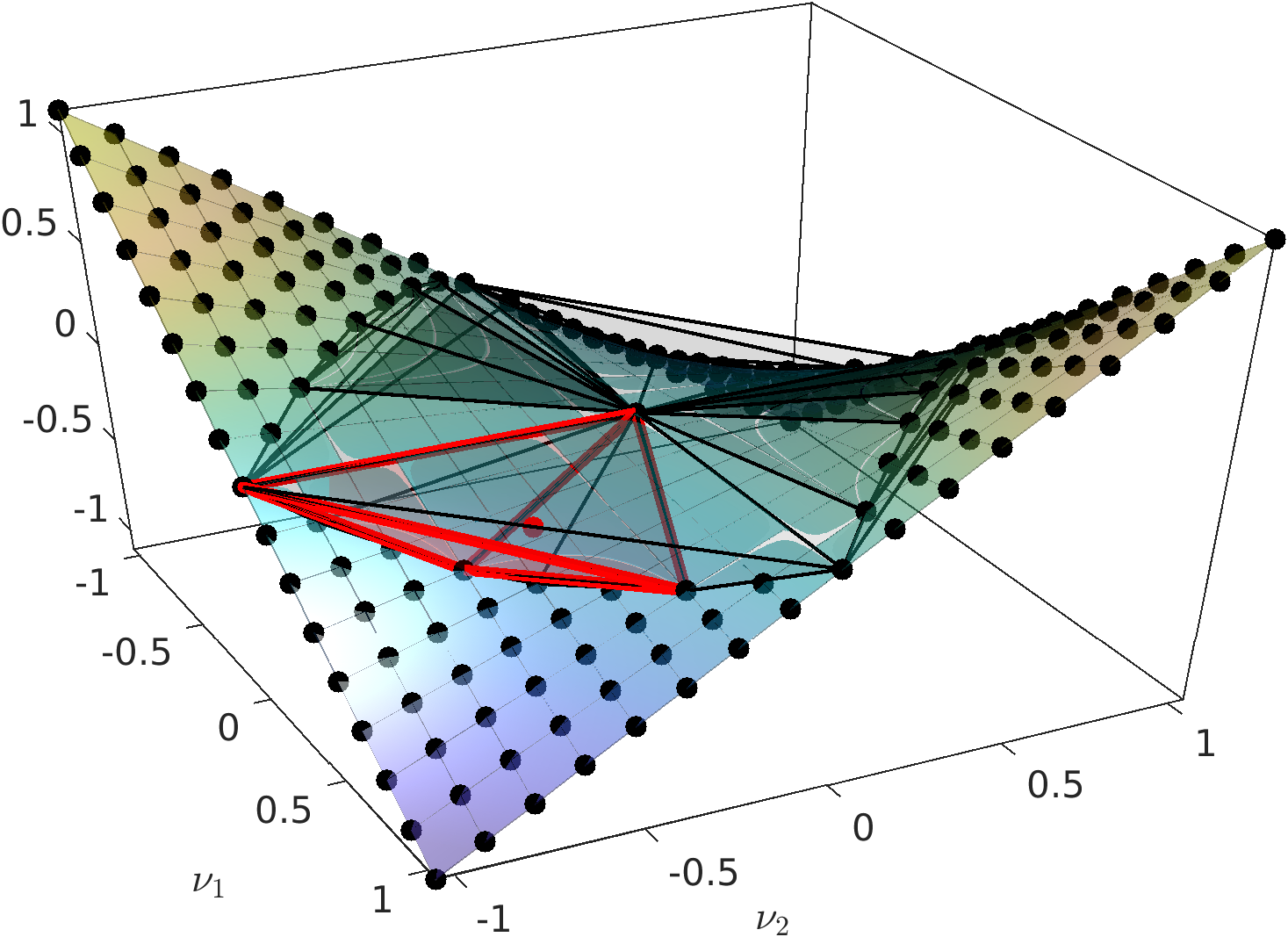}
	\caption{Discretization of signed sigular values: Manifold of minors $\m$ (colored surface, both), lifted lattice points $X_\delta=\m(\Sigma_\delta)$ ($\bullet$, left), supporting points of the polyconvex envelope $X_\delta^\c)$ ($\bullet$, right), simplicial mesh $(\mathcal T_{\delta},X_\delta^\c)$ restricted area of non-polyconvexity (right) for the example of Section~\ref{subsec:KSDexample}. A sample point $\hat{\nu}$ is highlighted in both figures in red. The right figure also highlights the tetrahedron of the mesh that contains $\hat{\nu}$ and thus forms the basis for the interpolatory evaluation of the approximate polyconvex envelope in $\hat\nu$.
	\label{fig:SignedSingularValueDiscretization}}
\end{figure}

\subsection{Convexification by \texttt{Quickhull}}\label{subsec:quickhull}
The method \texttt{convexify} in Step~$3$ of Algorithm~\ref{alg:SVP} computes the lower convex envelope of the rows of the $N_\delta\times (k_d+1)$-matrix $p = [X_\delta, \Phi(\Sigma_\delta)]$. 
This computation can be translated into the geometrical framework and identified with the computation of the convex hull of the corresponding point set in $\R^{k_d+1}$. This issue can be addressed by a computational geometry approach; to this end one can us the \texttt{Quickhull} algorithm \cite{BarDobHuh96}. 
For point sets in dimension higher than three as in our case, it is also the most efficient one known. The algorithm follows a divide-and-conquer approach and is of complexity $\mathcal{O}(M^{\lfloor n / 2\rfloor})$, where $M$ denotes the number of input points and $n$ their dimension \cite{Sei81, Cha93}. This translates to a worst-case complexity of $\mathcal{O}(N_\delta^{\lfloor (k_d+1) / 2\rfloor})$ in the present application. 

The typical output representation of \texttt{Quickhull} is a simplicial mesh $\mathcal T^\c_\delta$ of the $k_d$-dimensional convex hull in a $k_d+1$-dimensional space with vertices given by the subset of the input points. 
This simplicial representation may not be unique unless the vertices are in general position. However, this possible non-uniqueness in the output does not affect the convex hull. Any simplicial representation of the lower hull is perfectly fine for our purposes. 
Another technical problem, which is associated with this computational geometry approach, is that it does not treat the point set as the graph of a function, i.e., it returns the full convex hull of the point cloud rather than the convex envelope which correspond to the function.
However, the convex envelope of the function can be easily extracted from the hull by computing the outer normals of the facets (which are often provided by \texttt{Quickhull} implementations anyway). If the $(k_d+1)$st component of the normal is strictly negative, the corresponding facet is part of the envelope of the function; otherwise, it needs to be removed from the mesh.

The simplicial mesh $\mathcal T^\c_\delta$ usually encodes the convex hull as a surface consisting of facets ($k_d$-simplices) by providing for each facet its $k_d+1$ vertices as row indices of the list of supporting vertices $[X^\c_\delta,h^\c_\delta]$. The restriction of this mesh representation of the lower convex hull to the $k_d$-dimensional space is a simplicial volume mesh $(\mathcal T^\c_\delta,X^\c_\delta)$ of the convex hull of the lifted lattice points $X_\delta^c\subset\mathfrak{m}_d$, as illustrated in Figure~\ref{fig:SignedSingularValueDiscretization}. Thus, the values $h_\delta^\c$ represent a continuous piecewise linear function on this mesh of $\operatorname{conv}(X_\delta^c)$. In step $4$ of the algorithm, the evaluation of this piecewise affine function by \texttt{interpolate} can be realized as follows. A suitable search algorithm returns a simplex $\operatorname{conv}\{x_1,\ldots,x_{k_d}\}\in\mathcal T_\delta$ that contains $\m(\hat\nu)$ (the highlighted simplex in Figure~\ref{fig:SignedSingularValueDiscretization}). It remains to compute the barycentric coordinates $c_1,\ldots,c_{k_d}\geq 0$ of $\m(\hat\nu)$ within this simplex, which satisfy
\begin{equation}
	\label{eq:bary}
	\m(\hat\nu)=\sum_{i=1}^{k_d}\xi_i x_i,\quad \xi_1+\ldots+\xi_{k_d}=1.
\end{equation}
Then the desired approximation of $\Phi^{\pc}(\hat\nu)$ is given by 
$$\Phi_\delta^{\pc}(\hat\nu) = \sum_{i=1}^{k_d}\xi_i(h^\c_\delta)_i.$$
In the case that $\Phi$ attains $\infty$ and some grid points are removed from the discretization, $\hat{\nu}$ is not necessarily contained in a simplex of $\mathcal{T}_\delta$.
This corresponds to the fact that $\Phi_{\delta}^{\pc}(\hat{\nu}) = \infty$ and this aspect has to be taken into account in the evaluation.

It is clear that the approximate polyconvex envelope can be evaluated for multiple arguments by repeated calls of the interpolation procedure but without another call to \texttt{Quickhull}. The feasibility of the resulting practical algorithm is demonstrated in the numerical experiments of Section~\ref{subsec:KSDexample}. 

\subsection{Convexification by linear programming}\label{subsec:convexlp}
The computational geometry approach above aims for the lower convex envelope in the full bounding box. Often, we are only interested in the evaluation of an approximated envelope in a small number of points. In this case, it is useful to use the pointwise characterization of $\Phi^{\pc}$ and $h^{\c}$ at $\hat\nu$ and $\hat x =\m(\hat\nu)$, respectively, which is given by the optimization problem
\begin{align}\label{eq:poly-opt-prob-iso}
	\Phi^{\pc}(\hat\nu) = h^{\c} (\hat x) = \inf \left\{\sum_{i = 1}^{k_d + 1} \xi_i \, h(x_i)\;\biggl\vert\;  \xi_{i} \in [0, 1],\, x_i \in \R^{k_d},\, \sum_{i = 1}^{k_d + 1} \xi_{i} = 1,\, \sum_{i = 1}^{k_d + 1} \xi_i x_i = \hat x \right\}.
\end{align}
This formulation was derived in \cite{Bar05} (see also \cite{EBG13}, \cite{BEG15}, and \cite{Bar15}) for the classical notion of polyconvexity and non-isotropic $W$.
After suitable discretization of $\R^{k_d}$, e.g., by the lifted lattice ${\m(\Sigma_\delta)=X_\delta=\{x_i\}_{i=1}^{N_\delta}}$, this nonlinear optimization problem turns into the following linear program
\begin{align}\label{eq:poly-opt-prob-iso-discr}
	\Phi^{\pc}_\delta (\hat\nu) =h_{\delta}^{\c} (\hat x) = \min \left\{\sum_{i = 1}^{N_\delta} \xi_{i} \, h(x_i) \;\biggl\vert\; \xi_{i} \geq 0,\, \sum_{i = 1}^{N_\delta} \xi_{i} = 1,\, \sum_{i = 1}^{N_\delta} \xi_{i} x_{i} = \hat x \right\}.
\end{align}
Possible infinite values of $h(x_i)=h(\m(\nu_i))=\Phi(\nu_i)$ should be avoided by removing the corresponding vertices from the lattice as discussed earlier. 
Rather than first computing the full lower convex envelope and then searching for a facet that contains $\hat x=\m(\hat\nu)$ as in Figure~\ref{fig:SignedSingularValueDiscretization}, this approach directly computes that facet. In exact arithmetics, there exist a minimizer $\xi$ with at most $k_d+1$ non-zero entries that represents the barycentric coordinates of $\hat x$ with respect to the vertices of the $k_d$-simplex. Up to the possible non-uniqueness of the $k_d+1$ indices, which corresponds to non-zero entries, and the simplicial mesh representation of the convex envelope, these vertices  $x_i$ and the corresponding nonzero coefficients $\xi_i$ coincide with those of \eqref{eq:bary}.
The linear program \eqref{eq:poly-opt-prob-iso-discr} can be solved fairly efficiently by standard algorithms for linear programming available in various software libraries. In \cite{Bar05}, an active set strategy along with multilevel optimization and adaptive techniques was suggested to improve the runtime in the setting of general non-isotropic functions further. 
We will use the algorithm along with the Matlab function for linear programming in the numerical experiments of Section~\ref{sec:examples}.

\section{Numerical experiments}\label{sec:examples}
In this section, we illustrate the performance of the presented singular value polyconvexification algorithm with a number of numerical examples.
Due to the requirement of isotropy, we restrict ourselves to functions that satisfy this condition. 
In our experiments, we use Algorithm~\ref{alg:SVP} with both \texttt{Quickhull} and the linear programming approach to convexification discussed in Sections~\ref{subsec:quickhull} and \ref{subsec:convexlp}, respectively. 
Further, we apply both approaches also for the complete matrix case by convexifying \eqref{eq:H} in order to investigate the computational speedup resulting from the dimension reduction.

A basic Matlab \cite{MATLAB} implementation of the two variants of Algorithm~\ref{alg:SVP} and their application to the examples of Sections~\ref{subsec:KSDexample}--\ref{subsec:doubleWell} is available as supplementary material. 
We use the Matlab internal implementation of the \texttt{Quickhull} algorithm as well as the \texttt{Interior-Point-Legacy} method for solving the linear program \eqref{eq:poly-opt-prob-iso-discr} using \texttt{linprog}.

\subsection{Kohn-Strang-Dolzmann example} \label{subsec:KSDexample}
The following example was studied in \cite{KohStr86a, KohStr86b}, subsequently modified to achieve continuity in \cite{Dol99, DolWal00} and further studied in \cite{Bar05}. 
\begin{figure}
	\begin{center}
		\begin{subfigure}[b]{0.48\textwidth}
			\centering 
			\includegraphics[scale=0.6]{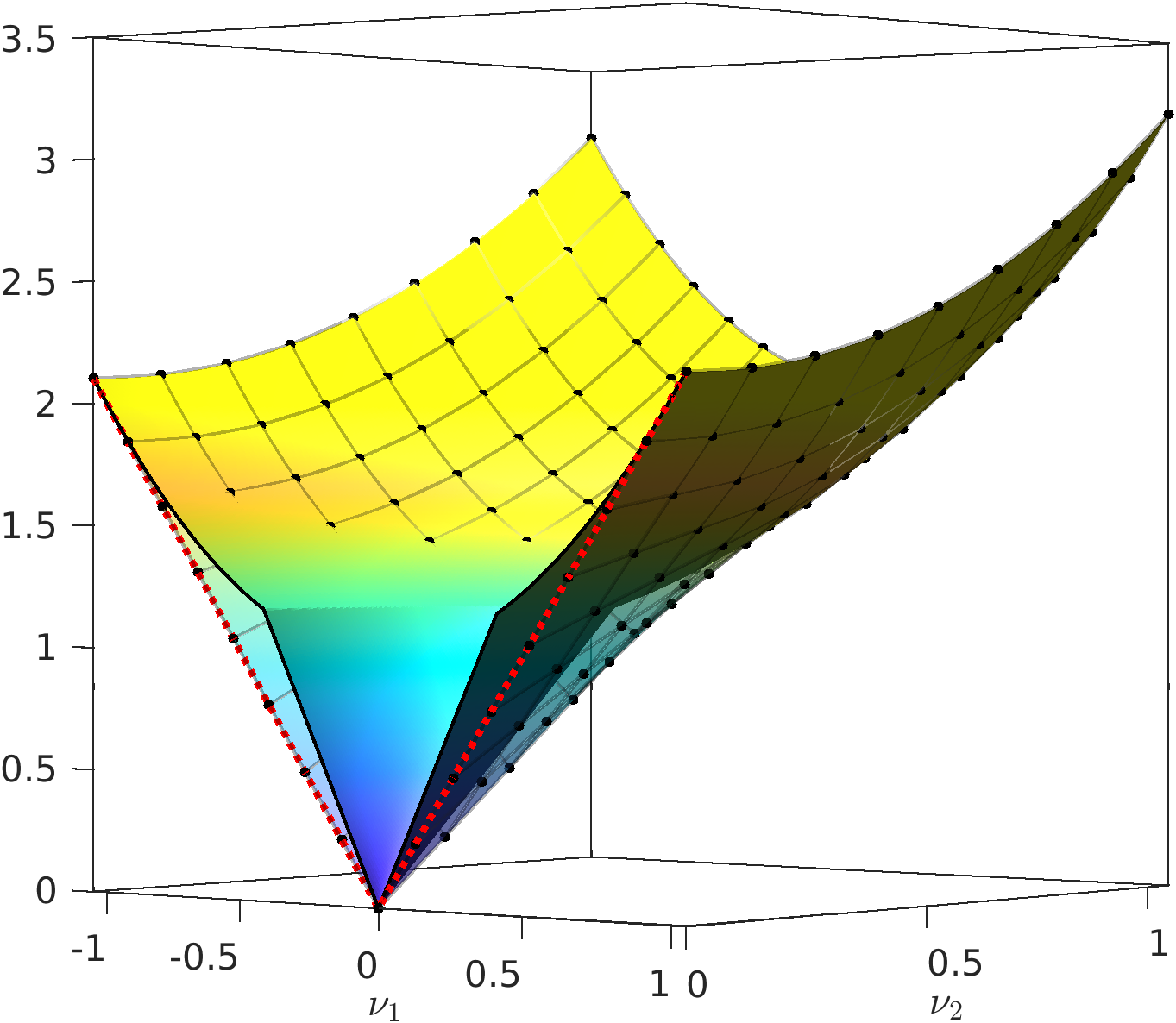}
			\label{fig:phipcKSDol}
			\caption{Surface plots of $\Phi$ (upper) and $\Phi^\pc$ (lower), computed polyconvex hull $\Phi^\pc_\delta$ shown as a lattice with black bullets.}
		\end{subfigure}\hfill
		\begin{subfigure}[b]{0.48\textwidth}
			\centering 
			\includegraphics[scale=0.58]{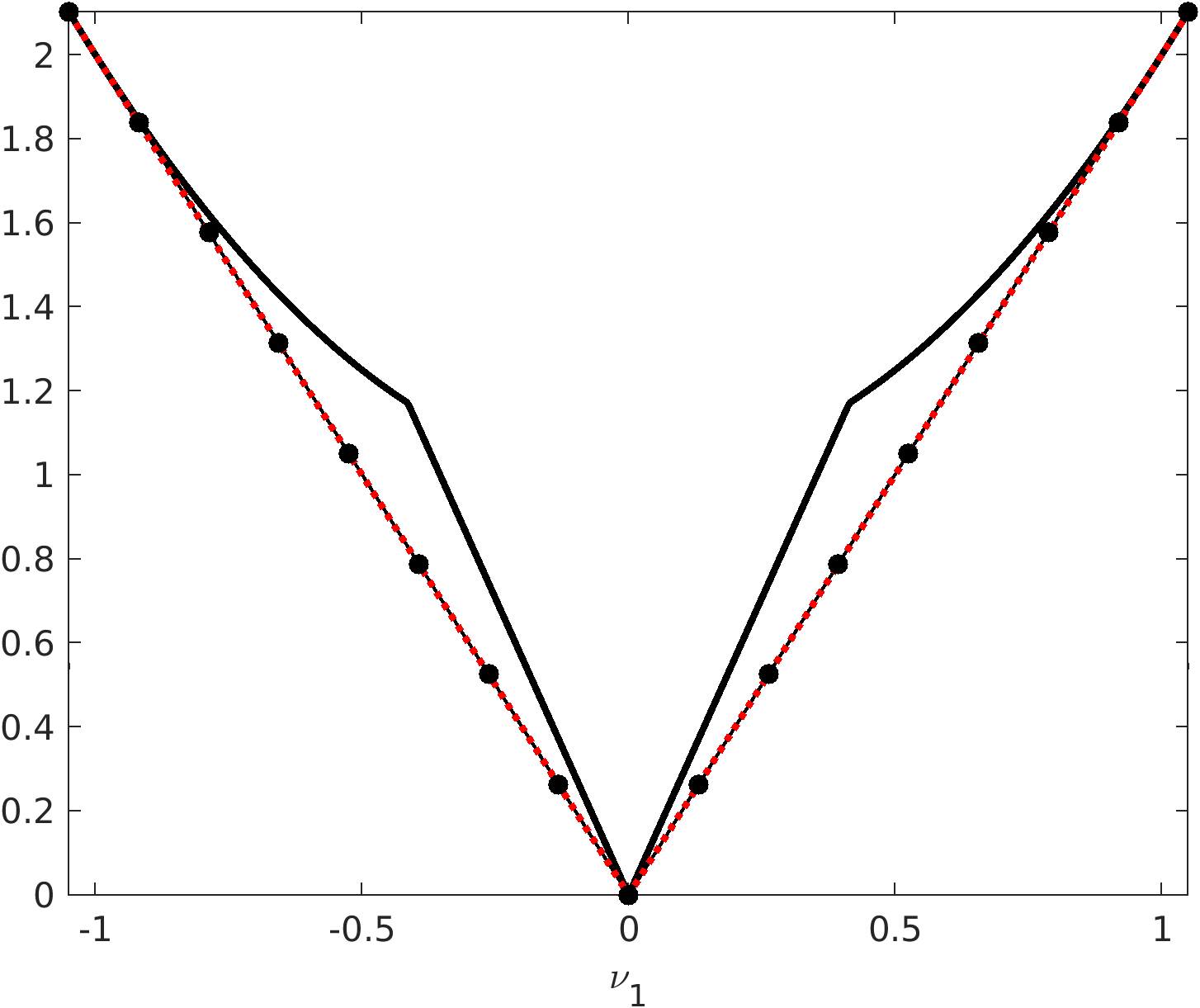}
			\caption{$\Phi$ (solid black) and $\Phi^\pc$ (dotted red) vs. $\nu_1$ for $\nu_2=0$, computed polyconvex hull $\Phi^\pc_\delta$ shown as a 1d lattice with black bullets.}
			\label{fig:phiKSDol}
		\end{subfigure}
		\caption{Illustration of function $\Phi$ from \eqref{eq:PHI}, its polyconvex hull $\Phi^\pc$ from \eqref{eq:PHIpc} and computed polyconvex hull $\Phi^\pc_\delta$ using \texttt{Quickhull} and discretization parameters $r = 1.05$, $\delta = 0.13125$, using $N_\delta = 17^2$ lattice points.}
		\label{fig:functionsKSDol}
	\end{center}
\end{figure}
We consider the function ${W\colon \R^{2 \times 2} \to \R}$, defined as
\begin{align*}
W(F) \coloneqq
\begin{cases}
	1 + |F|^2 & \text{if } |F| \geq \sqrt{2} - 1, \\
	2\, \sqrt{2} \, |F| & \text{if } |F| \leq \sqrt{2} - 1,
\end{cases}
\end{align*}
where $|F| \coloneqq\big(\sum_{i,j=1}^{d} F_{ij}^2\big)^{1/2}$ denotes the Frobenius norm of $F$. The polyconvex hull of $W$ is explicitly known and reads
\begin{align*}
W^{\pc} (F) =
\begin{cases}
1 + |F|^2 & \text{if } \varrho \geq 1, \\
2 \, (\varrho(F) - |\det(F)|) & \text{if } \varrho \leq 1,
\end{cases}
\end{align*}
where $\varrho(F) \coloneqq \sqrt{|F| + 2 \, |\det F|}$. 
The functions $W$ and $W^{\pc}$ are isotropic and, rewritten in terms of the signed singular values, they reduce to $\Phi,\Phi^{\pc} \colon \R^{2} \to \R$ with
\begin{align}\label{eq:PHI}
	\Phi(\nu) \coloneqq 
	\begin{cases}
		1 + \nu_1^2 + \nu_2^2 & \text{if } \sqrt{\nu_1^2 + \nu_2^2} \geq \sqrt{2} - 1, \\
		2\, \sqrt{2} \, \sqrt{\nu_1^2 + \nu_2^2}  & \text{if } \sqrt{\nu_1^2 + \nu_2^2} \leq \sqrt{2} - 1,
	\end{cases}
\end{align}
and
\begin{align}\label{eq:PHIpc}
	\Phi^{\pc} (F) =
	\begin{cases}
		1 + \nu_1^2 + \nu_2^2 & \text{if } \varrho \geq 1, \\
		2 \, (\varrho(\nu_1 \, \nu_2) - |\nu_1 \, \nu_2|) & \text{if } \varrho \leq 1,
	\end{cases}
\end{align}
with $\varrho(\nu)^2 = \sqrt{\nu_1^2 + \nu_2^2} + 2 \, |\nu_1 \, \nu_2|$. 
Figure~\ref{fig:functionsKSDol} shows the function $\Phi$, its polyconvex envelope $\Phi^\pc$ and the computed approximation $\Phi^{\pc}_\delta$, resulting from the Algorithm~\ref{alg:SVP} with \texttt{Quickhull} as outlined in Section~\ref{subsec:quickhull}.
In Figure~\ref{fig:errorsAndRunntimesKSDol}, we compare this variant of the algorithm (referred to as svpc QH) with the variant based on linear programming (svpc LP) described in Section~\ref{subsec:convexlp}. For reference, we also show the results for the corresponding algorithms for the polyconvex hull $W$ without exploiting isotropy. As a linear programming variant of the polyconvexification of $W$, we use the adaptive algorithm of \cite{Bar05} and its implementation presented in \cite{Bar15} (denoted pc \cite{Bar05}). We also implemented the direct convexification of $W$ using \texttt{Quickhull}, which convexifies $H$ in \eqref{eq:Wpc=HcM} (pc QH). All simulations were performed on a state-of-the-art laptop computer.

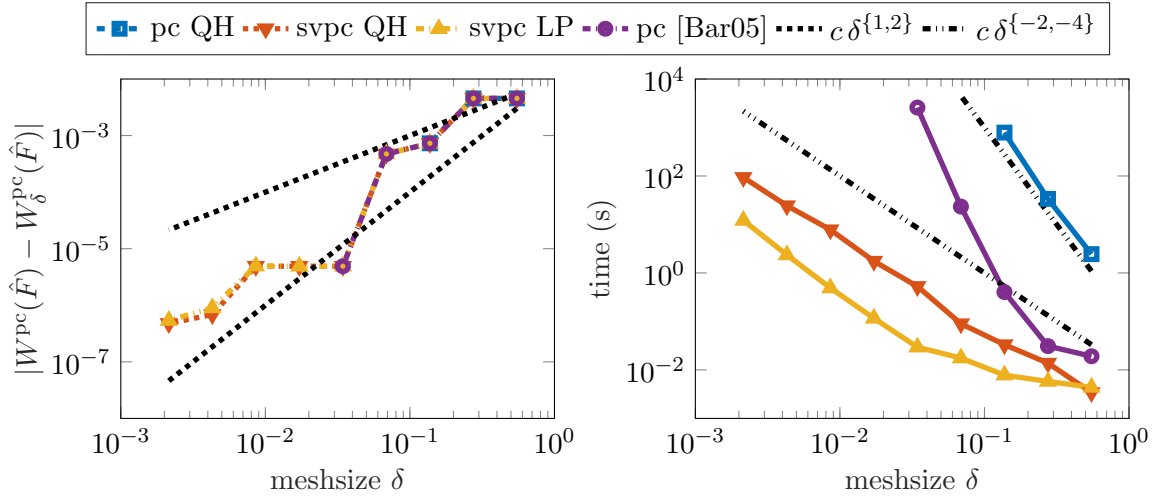
\begin{figure}
	\begin{center}
		\setlength\figurewidth{0.4\textwidth}
%
%
\definecolor{mycolor1}{rgb}{0.00000,0.44700,0.74100}%
\definecolor{mycolor2}{rgb}{0.85000,0.32500,0.09800}%
\definecolor{mycolor3}{rgb}{0.92900,0.69400,0.12500}%
\definecolor{mycolor4}{rgb}{0.49400,0.18400,0.55600}%
\begin{tikzpicture}

\begin{axis}[%
width=0.951\figurewidth,
height=0.75\figurewidth,
at={(-0.63\figurewidth,0\figurewidth)},
scale only axis,
xmode=log,
xmin=0.001,
xmax=1,
xminorticks=true,
xlabel style={font=\color{white!15!black}},
xlabel={meshsize $\delta$},
ymode=log,
ymin=1e-08,
ymax=0.01,
yminorticks=true,
ylabel style={font=\color{white!15!black}},
ylabel={$|W^{\pc}(\hat{F}) - W_\delta^{\pc}(\hat{F})|$},
axis background/.style={fill=white},
legend columns = 7,
legend style={legend cell align=left, align=left, at={(1.1,1.07)}, anchor=south, draw=white!15!black}
]

\addplot [color=mycolor1, dashed, line width=2.0pt, mark=square, mark options={solid, mycolor1}]
  table[row sep=crcr]{%
0.55	0.00454545454545474\\
0.275	0.00454545454545463\\
0.1375	0.000730519480519831\\
0.06875	0\\
0.034375	0\\
0.0171875	0\\
0.00859375	0\\
0.004296875	0\\
0.0021484375	0\\
};
\addlegendentry{pc QH}

\addplot [color=mycolor2, dotted, line width=2.0pt, mark=triangle, mark options={solid, rotate=180, mycolor2}]
  table[row sep=crcr]{%
0.55	0.00454545454545452\\
0.275	0.00454545454545463\\
0.1375	0.000730519480519942\\
0.06875	0.000473484848485195\\
0.034375	4.89811912274263e-06\\
0.0171875	4.89811912240956e-06\\
0.00859375	4.89811912240956e-06\\
0.004296875	6.85841787140262e-07\\
0.0021484375	4.77303274792895e-07\\
};
\addlegendentry{svpc QH}

\addplot [color=mycolor3, dash dot dot, line width=2.0pt, mark=triangle, mark options={solid, mycolor3}]
  table[row sep=crcr]{%
0.55	0.00454546520632226\\
0.275	0.00454545656076755\\
0.1375	0.000730533314592496\\
0.06875	0.000473485173178023\\
0.034375	4.90284235410421e-06\\
0.0171875	4.89816805027132e-06\\
0.00859375	4.92141139474267e-06\\
0.004296875	8.98402114257735e-07\\
0.0021484375	5.38319854359592e-07\\
};
\addlegendentry{svpc LP}

\addplot [color=mycolor4, dashdotted, line width=2.0pt, mark=o, mark options={solid, mycolor4}]
  table[row sep=crcr]{%
0.55	0.00454545454545463\\
0.275	0.00454545454545463\\
0.1375	0.000730519480520608\\
0.06875	0.00047348484848464\\
0.034375	0.000004898119126\\
0.0171875	0\\
0.00859375	0\\
0.004296875	0\\
0.0021484375	0\\
};
\addlegendentry{pc [Bar05]}

\addplot [color=black, dotted, line width=2.0pt]
  table[row sep=crcr]{%
0.55	0.0055\\
0.275	0.00275\\
0.1375	0.001375\\
0.06875	0.0006875\\
0.034375	0.00034375\\
0.0171875	0.000171875\\
0.00859375	8.59375e-05\\
0.004296875	4.296875e-05\\
0.0021484375	2.1484375e-05\\
};
\addlegendentry{$c\,\delta^{\{1, 2\}}$}

\addplot [color=black, dash dot dot, line width=2.0pt]
table[row sep=crcr]{%
	NaN	NaN\\
};
\addlegendentry{$c\,\delta^{\{-2, -4\}}$}
\addlegendimage{color=black, dash dot dot, line width=2.0pt}

\addplot [color=black, dotted, line width=2.0pt, forget plot]
  table[row sep=crcr]{%
0.55	0.003025\\
0.275	0.00075625\\
0.1375	0.0001890625\\
0.06875	4.7265625e-05\\
0.034375	1.181640625e-05\\
0.0171875	2.9541015625e-06\\
0.00859375	7.38525390625e-07\\
0.004296875	1.8463134765625e-07\\
0.0021484375	4.61578369140625e-08\\
};
\end{axis}

\begin{axis}[%
width=0.951\figurewidth,
height=0.75\figurewidth,
at={(0.63\figurewidth,0\figurewidth)},
scale only axis,
xmode=log,
xmin=0.001,
xmax=1,
xminorticks=true,
xlabel style={font=\color{white!15!black}},
xlabel={meshsize $\delta$},
ymode=log,
ymin=0.001,
ymax=10000,
yminorticks=true,
ylabel style={font=\color{white!15!black}},
ylabel={time (s)},
axis background/.style={fill=white},
legend style={at={(0.03,0.83)}, anchor=south west, legend cell align=left, align=left, draw=white!15!black}
]
\addplot [color=mycolor1, line width=2.0pt, mark=square, mark options={solid, mycolor1}]
table[row sep=crcr]{%
	0.55	2.443527\\
	0.275	33.721858\\
	0.1375	785.008479\\
	0.06875	0\\
	0.034375	0\\
	0.0171875	0\\
	0.00859375	0\\
	0.004296875	0\\
	0.0021484375	0\\
};

\addplot [color=mycolor2, line width=2.0pt, mark=triangle, mark options={solid, rotate=180, mycolor2}]
table[row sep=crcr]{%
	0.55	0.003421\\
	0.275	0.013814\\
	0.1375	0.033209\\
	0.06875	0.089796\\
	0.034375	0.52757\\
	0.0171875	1.749567\\
	0.00859375	7.786541\\
	0.004296875	24.386988\\
	0.0021484375	93.43147\\
};

\addplot [color=mycolor3, line width=2.0pt, mark=triangle, mark options={solid, mycolor3}]
table[row sep=crcr]{%
	0.55	0.004378\\
	0.275	0.005780\\
	0.1375	0.00781\\
	0.06875	0.017568\\
	0.034375	0.02952\\
	0.0171875	0.116067\\
	0.00859375	0.490234\\
	0.004296875	2.357029\\
	0.0021484375	12.103715\\
};

\addplot [color=mycolor4, line width=2.0pt, mark=o, mark options={solid, mycolor4}]
table[row sep=crcr]{%
	0.55	0.019138\\
	0.275	0.030983\\
	0.1375	0.40375\\
	0.06875	23.361705\\
	0.034375	2576.705955\\
	0.0171875	0\\
	0.00859375	0\\
	0.004296875	0\\
	0.0021484375	0\\
};

\addplot [color=black, dash dot dot, line width=2.0pt]
table[row sep=crcr]{%
	0.55	0.0330578512396694\\
	0.275	0.132231404958678\\
	0.1375	0.528925619834711\\
	0.06875	2.11570247933884\\
	0.034375	8.46280991735537\\
	0.0171875	33.8512396694215\\
	0.00859375	135.404958677686\\
	0.004296875	541.619834710744\\
	0.0021484375	2166.47933884297\\
};

\addplot [color=black, dash dot dot, line width=2.0pt, forget plot]
table[row sep=crcr]{%
	0.55	1.09282152858411\\
	0.275	17.4851444573458\\
	0.1375	279.762311317533\\
	0.06875	4476.19698108053\\
};
\end{axis}
\end{tikzpicture}%
		\caption{Quantitative comparison of computational polyconvexification of Kohn-Strang-Dolzmann example of Section~\ref{subsec:KSDexample} in the matrix $\hat F $ and known exact values $W(\hat{F}) \approx 1.095$ and $W^{\pc}(\hat{F}) = 0.9$. Left: Absolute error with respect to lattice parameter $\delta$ for several methods. Right: Corresponding computing times.}
		\label{fig:errorsAndRunntimesKSDol}
	\end{center}
\end{figure}
The left graph in Figure~\ref{fig:errorsAndRunntimesKSDol} shows absolute errors in evaluating the polyconvex hull at the point 
$$\hat F=\begin{bmatrix} 0.2 & 0.1 \\ 0.1 & 0.3 \end{bmatrix}$$ 
with singular values $\hat\nu_1 \approx 0.3618$ and $\hat\nu_2 \approx 0.1282$. For fixed $r=1.1$, the presented algorithms indeed converge at a rate $\delta^{1 + \alpha}$ for some $\alpha \in [0, 1]$ as the lattice parameter $\delta$ is refined, in agreement with theoretical predictions \eqref{eq:errorW}--\eqref{eq:errorPHI}.

Although the algorithms are virtually equally accurate, they differ significantly in computational complexity. With the exception of the pc \cite{Bar05} variant, computation times actually scale only linearly with the number of grid points, which in turn is proportional to $\delta^{-4}$ for pc QH and $\delta^{-2}$ for svpc QH and svpc LP. This is much better than the worst-case complexity of \texttt{Quickhull}, which predicts $\delta^{-8}$ and $\delta^{-4}$ for pc QH. The observed behavior seems to correspond to the \texttt{Quickhull} complexity in the dimensional space $4$ or $2$, which represents the dimensions of the manifolds and not the ambient space.

In any case, the numerical results clearly demonstrate the claimed superiority of the new algorithms in the isotropic regime. Of the two variants of singular value polyconvexity, the linear programming variant appears to be faster for our implementation. Note, however, that \texttt{Quickhull} approximates the envelope throughout the bounding box and it would easily pay off if the polyconvex envelope is to be evaluated at multiple points. 

\subsection{Multi-dimensional double-well potential}\label{subsec:doubleWell} 
Since the generalization of the previous example to three dimensions is not known explicitly, we consider the function
\begin{align*}
	W \colon \R^{d \times d} & \to \R \\
	F & \mapsto \left(|F|^2 - 1\right)^2 
\end{align*}
which is an established benchmark for analytical and computational semi-convexification \cite{KohStr86a, DolWal00, Bar05}. For any $d\in\{1,2,3\}$, the rank-one, the quasiconvex and the polyconvex envelope of $W$ coincide with the convex envelope given by
\begin{align*}
	W^{\pc}(F) =
	\begin{cases}
		\left(|F|^2 - 1\right)^2 & \text{if } |F| \geq 1,\\
		0 & \text{else.}
	\end{cases}
\end{align*}
The function $W$ is isotropic and can reformulated in terms of the signed singular values via the function $\Phi$ by
\begin{align} \label{eq:PhiDoubleWell}
	\Phi (\nu) = \left(\sum_{i=1}^{d} \nu_i^2 - 1 \right)^2.
\end{align}
Similarly, the polyconvex hull of $\Phi$ can be expressed as 
\begin{align} \label{eq:PHIpcDoubleWell}
	\Phi^{\pc} (\nu) = \begin{cases}
		\left(\sum_{i=1}^{d} \nu_i^2  - 1 \right)^2 & \text{if } \sum_{i=1}^{d} \nu_i^2 \geq 1,\\
		0 & \text{else.}
	\end{cases}
\end{align}
An illustration of the function $\Phi$ and the computed envelope $\Phi_{\delta}^{\pc}$ for three spatial dimensions is given in Figure~\ref{fig:doubleWell3DSlice}. 
\begin{figure}
	\begin{center}
		\setlength\figurewidth{0.5\textwidth}
		\input{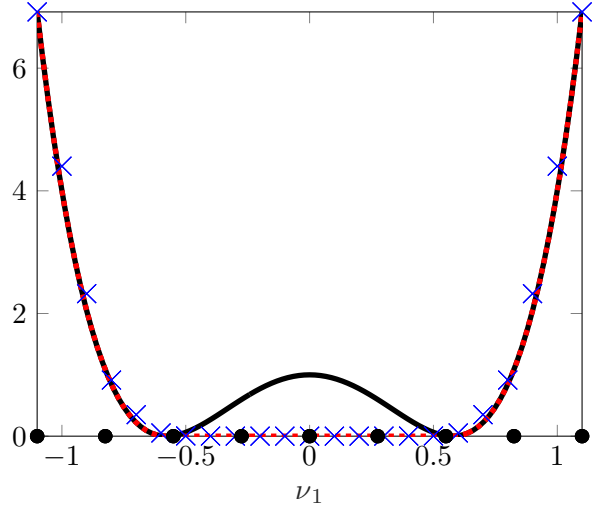}		
		\caption{Slice of the three dimensional double well example. Function $\Phi$ \eqref{eq:PhiDoubleWell} (solid black), its polyconvex hull $\Phi^\pc$ \eqref{eq:PHIpcDoubleWell} (dotted red) vs. $\nu_1=\nu_2=\nu_3$, evaluation of polyconvex hull $\Phi^\pc_\delta$ via svpc LP \eqref{eq:poly-opt-prob-iso-discr} (blue crosses), a selection of lattice points involved in the minimization problem (black bullets).}
		\label{fig:doubleWell3DSlice}
	\end{center}
\end{figure}
The slice along the diagonal direction $\nu_1\,[1,1,1]^T$ is plotted.
The calculation via the svpc LP approach is based on the lattice characterized by $\delta=0.28125$ and the radius $r = 1.125$. In total $9^3 = 729$ lattice points are involved in the minimization problem. A selection of those lattice points is marked by black bullets, exactly the ones lying on the diagonal slice. The sequential pointwise evaluation of $\Phi_{\delta}^{\pc}$ is marked by blue crosses.

Given the performance of the algorithms in the two-dimensional example of Section~\ref{subsec:KSDexample}, we restrict the quantitative study of convergence and complexity to the linear programming variant of Algorithm~\ref{alg:SVP}. For $d=2$ and $d=3$, absolute errors and computing times are shown in Figure~\ref{fig:doubleWellComputingTimes} relative to the lattice parameter $\delta$. The radius of the bounding box is set to $r=2$. For this particularly smooth energy density and polyconvex envelope, the convergence of the error in the points $$\hat{F} = \begin{bmatrix} 0.2 & 0.1 \\ 0.1 & 0.3\end{bmatrix}$$ and $\hat{F} = \diag (0.3, 0.3, 0.3)$ is faster than expected, proportional to $\delta^4$ in both the two- and three-dimensional case. The corresponding computing times scale linearly in the number of lattice points, i.e., they are proportional to $\delta^{-d}$ for $d=2,3$. This example shows that the presented algorithm is able to perform a convexification of isotropic functions in the three-dimensional case which is hardly feasible without exploiting isotropy. 
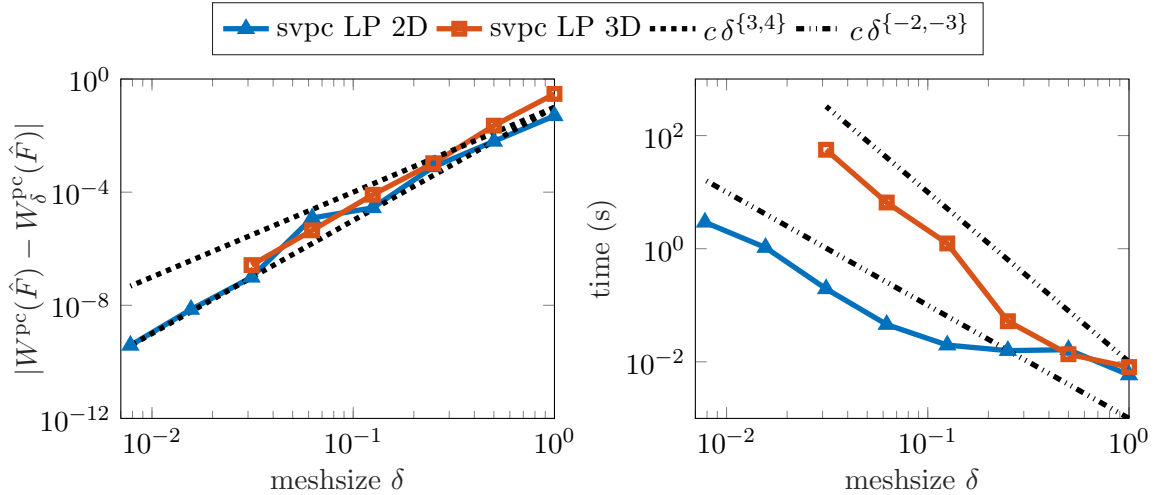
\begin{figure}
	\begin{center}
		\setlength\figurewidth{0.4\textwidth}
%
%
\definecolor{mycolor1}{rgb}{0.00000,0.44700,0.74100}%
\definecolor{mycolor2}{rgb}{0.85000,0.32500,0.09800}%
\begin{tikzpicture}

\begin{axis}[%
width=0.951\figurewidth,
height=0.75\figurewidth,
at={(-0.63\figurewidth,0\figurewidth)},
scale only axis,
xmode=log,
xmin=0.007,
xmax=1,
xminorticks=true,
xlabel style={font=\color{white!15!black}},
xlabel={meshsize $\delta$},
ymode=log,
ymin=1e-12,
ymax=1,
yminorticks=true,
ylabel style={font=\color{white!15!black}},
ylabel={$|W^{\pc}(\hat{F}) - W_\delta^{\pc}(\hat{F})|$},
axis background/.style={fill=white},
legend columns = 4,
legend style={legend cell align=left, align=left, at={(1.1,1.07)}, anchor=south, draw=white!15!black}
]
\addplot [color=mycolor1, line width=2.0pt, mark=triangle, mark options={solid, mycolor1}]
  table[row sep=crcr]{%
1	0.0500000018903411\\
0.5	0.00625000147537883\\
0.25	0.000781254151022917\\
0.125	2.79019066323088e-05\\
0.0625	1.22070327453941e-05\\
0.03125	9.76562568372906e-08\\
0.015625	7.10604643916807e-09\\
0.0078125	3.82151400515227e-10\\
};
\addlegendentry{svpc LP 2D}

\addplot [color=mycolor2, line width=2.0pt, mark=square, mark options={solid, mycolor2}]
  table[row sep=crcr]{%
1	0.297000000000004\\
0.5	0.0225000000002624\\
0.25	0.00105468750030426\\
0.125	8.11298078743094e-05\\
0.0625	4.46901483192023e-06\\
0.03125	2.63991008308178e-07\\
};
\addlegendentry{svpc LP 3D}

\addplot [color=black, dotted, line width=2.0pt]
  table[row sep=crcr]{%
1	0.1\\
0.5	0.0125\\
0.25	0.0015625\\
0.125	0.0001953125\\
0.0625	2.44140625e-05\\
0.03125	3.0517578125e-06\\
0.015625	3.814697265625e-07\\
0.0078125	4.76837158203125e-08\\
};
\addlegendentry{$c\,\delta^{\{3, 4\}}$}

\addplot [color=black, dash dot dot, line width=2.0pt]
table[row sep=crcr]{%
	NaN	NaN\\
};
\addlegendentry{$c\,\delta^{\{-2,-3\}}$}
\addlegendimage{color=black, dash dot dot, line width=2.0pt}

\addplot [color=black, dotted, line width=2.0pt, forget plot]
  table[row sep=crcr]{%
1	0.1\\
0.5	0.00625\\
0.25	0.000390625\\
0.125	2.44140625e-05\\
0.0625	1.52587890625e-06\\
0.03125	9.5367431640625e-08\\
0.015625	5.96046447753906e-09\\
0.0078125	3.72529029846191e-10\\
};
\end{axis}

\begin{axis}[%
width=0.951\figurewidth,
height=0.75\figurewidth,
at={(0.63\figurewidth,0\figurewidth)},
scale only axis,
xmode=log,
xmin=0.007,
xmax=1,
xminorticks=true,
xlabel style={font=\color{white!15!black}},
xlabel={meshsize $\delta$},
ymode=log,
ymin=0.001,
ymax=1000,
yminorticks=true,
ylabel style={font=\color{white!15!black}},
ylabel={time (s)},
axis background/.style={fill=white},
legend style={at={(0.03,0.93)}, anchor=south west, legend cell align=left, align=left, draw=white!15!black}
]
\addplot [color=mycolor1, line width=2.0pt, mark=triangle, mark options={solid, mycolor1}]
  table[row sep=crcr]{%
1	0.005857\\
0.5	0.016404\\
0.25	0.015772\\
0.125	0.019777\\
0.0625	0.045687\\
0.03125	0.195253\\
0.015625	1.049023\\
0.0078125	2.935263\\
};

\addplot [color=mycolor2, line width=2.0pt, mark=square, mark options={solid, mycolor2}]
  table[row sep=crcr]{%
1	0.008094\\
0.5	0.013642\\
0.25	0.052252\\
0.125	1.23851\\
0.0625	6.560479\\
0.03125	56.118723\\
};

\addplot [color=black, dash dot dot, line width=2.0pt]
  table[row sep=crcr]{%
1	0.001\\
0.5	0.004\\
0.25	0.016\\
0.125	0.064\\
0.0625	0.256\\
0.03125	1.024\\
0.015625	4.096\\
0.0078125	16.384\\
};

\addplot [color=black, dash dot dot, line width=2.0pt, forget plot]
  table[row sep=crcr]{%
1	0.01\\
0.5	0.08\\
0.25	0.64\\
0.125	5.12\\
0.0625	40.96\\
0.03125	327.68\\
};
\end{axis}
\end{tikzpicture}%
		\caption{Quantitative comparison of computational polyconvexification via the svpc LP approach of the two and three dimensional double well example of Section~\ref{subsec:doubleWell} in the matrix $\hat F $. Left: Absolute error with respect to lattice parameter $\delta$ for $d \in \{2, 3\}$. Right: Corresponding computing times.}	
		\label{fig:doubleWellComputingTimes}
	\end{center}
\end{figure}

\subsection{Exponentiated Hencky-logarithmic energies} 
The new efficient algorithms for polyconvexication of isotropic energies allow us to shed light on a family of exponentiated Hencky type energies $W_{\text{eH}}: \R^{d \times d} \to \R$ recently proposed by  \cite{NefLanGhiMarSte2015}. Given parameters $\mu, \kappa, k,\ell>0$, they are given by
\begin{align*}
	W_{\text{eH}}(F) =
	\begin{cases} 
		\frac{\mu}{k} \, e^{k \|\dev_d \log U\|^2} + \frac{\kappa}{2 \ell} \, e^{\ell [\log \det U]^2}& \text{ if } \det(F) > 0, \\ 
		\infty & \text{ if } \det(F) \leq 0,
	\end{cases}
\end{align*}
where $U \coloneqq \sqrt{F^T F}$, $\log X$ the matrix logarithm and $\dev_d X = X - \frac{1}{d} \tr(X) I_d$ the deviatoric part of a matrix $X \in \R^{d \times d}$.
In the two-dimensional case, \cite[Theorem 3.11]{NefLanGhiMarSte2015} shows that if $k \geq \frac{1}{3}$ and $\ell\geq \frac{1}{8}$ then $W_{\text{eH}}$ is polyconvex. We will study the sharpness of this result using our algorithm. For this purpose, $W_{\text{eH}}$ is rephrased in terms of signed singular values as
\begin{align} \label{eq:PhiHencky}
	\Phi_{\text{eH}}(\nu) =
	\begin{cases} 
		\frac{\mu}{k} \, e^{k \|\diag(\log |\nu_1|, \dots, \log|\nu_d|) - \log(\prod_{i=1}^{d} |\nu_i|) I_d\|^2} + \frac{\kappa}{2 \ell} \, e^{\ell \log^2(\prod_{i=1}^{d} |\nu_i|)}& \text{ if }  \prod_{i=1}^{d} \nu_i > 0, \\ 
		\infty & \text{ else}.
	\end{cases}
\end{align}
Figure~\ref{fig:HenckyEnergy} shows contour plots of $\Phi_{\text{eH}}$ in the non-polyconvex regime and the computed polyconvex envelope $\Phi_{\text{eH}, \delta}^{\pc}$ for parameters $k=0.01, \ell=0.01, \mu = 1$ and $\kappa = 1$. The approximation $\Phi_{\text{eH}, \delta}^{\pc}$ was computed by the \texttt{Quickhull} approach and still shows some non-convexity.
\begin{figure}
	\begin{center}	
	\setlength\figurewidth{0.44\textwidth}
	\includegraphics{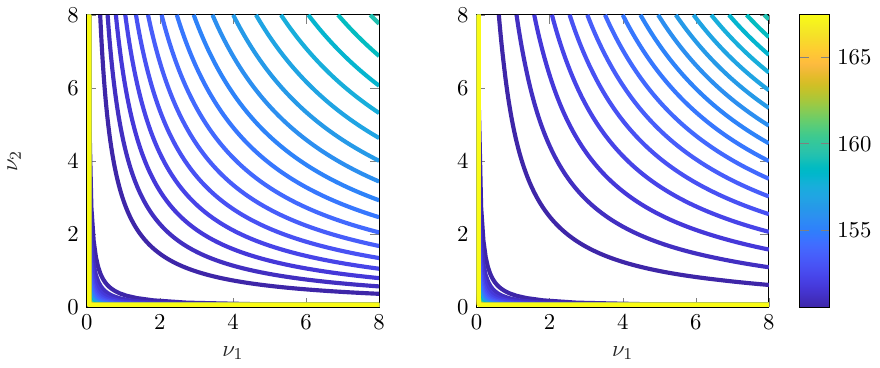}
	\caption{$\Phi_{\text{eH}}$ from \eqref{eq:PhiHencky} in the non polyconvex regime (left) and approximated polyconvex envelope $\Phi_{\text{eH}, \delta}^{\pc}$ by svpc QH approach (right) using lattice parameters $r=8$ and $\delta=0.0625$.}
	\label{fig:HenckyEnergy}
	\end{center}
\end{figure}

We fix the material parameters $\mu = 1$ and $\kappa = 1$ and study the polyconvexity of $\Phi_{\text{eH}}$ depending on the further parameters $k$ and $\ell$.
We choose a rather large bounding box of radius $r=6$ and fix the lattice parameter to $\delta=0.09375$.
We compute the polyconvex envelope in all lattice points using the \texttt{Quickhull} approach and compute the maximal absolute error between this approximate polyconvex hull and the original function. Errors at the order of the lattice parameter indicate polyconvexity of the original function while significantly larger errors indicate non-polyconvexity.
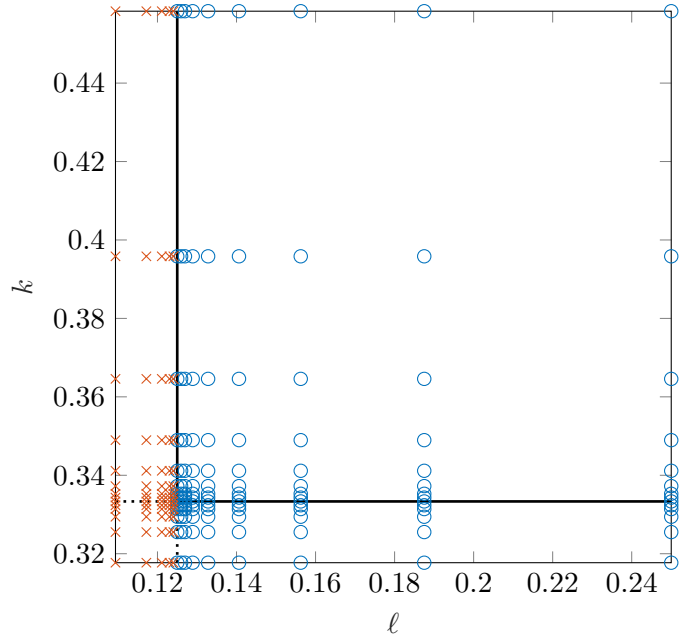
\begin{figure}
	\begin{center}
		\setlength\figurewidth{0.65\textwidth}
%
%
\definecolor{mycolor1}{rgb}{0.00000,0.44700,0.74100}%
\definecolor{mycolor2}{rgb}{0.85000,0.32500,0.09800}%
\begin{tikzpicture}

\begin{axis}[%
width=0.75\figurewidth,
height=0.75\figurewidth,
at={(0\figurewidth,0\figurewidth)},
scale only axis,
xmin=0.109375,
xmax=0.25,
xminorticks=true,
xlabel style={font=\color{white!15!black}},
xlabel={$\ell$},
ymin=0.317708333333333,
ymax=0.458333333333333,
yminorticks=true,
ylabel style={font=\color{white!15!black}},
ylabel={$k$},
axis background/.style={fill=white}
]
\addplot [color=mycolor1, only marks, mark size=2.5pt, mark=o, mark options={solid, mycolor1}, forget plot]
  table[row sep=crcr]{%
0.125	0.317708333333333\\
0.125	0.325520833333333\\
0.125	0.329427083333333\\
0.125	0.331380208333333\\
0.125	0.332356770833333\\
0.125	0.333333333333333\\
0.125	0.334309895833333\\
0.125	0.335286458333333\\
0.125	0.337239583333333\\
0.125	0.341145833333333\\
0.125	0.348958333333333\\
0.125	0.364583333333333\\
0.125	0.395833333333333\\
0.125	0.458333333333333\\
0.1259765625	0.317708333333333\\
0.1259765625	0.325520833333333\\
0.1259765625	0.329427083333333\\
0.1259765625	0.331380208333333\\
0.1259765625	0.332356770833333\\
0.1259765625	0.333333333333333\\
0.1259765625	0.334309895833333\\
0.1259765625	0.335286458333333\\
0.1259765625	0.337239583333333\\
0.1259765625	0.341145833333333\\
0.1259765625	0.348958333333333\\
0.1259765625	0.364583333333333\\
0.1259765625	0.395833333333333\\
0.1259765625	0.458333333333333\\
0.126953125	0.317708333333333\\
0.126953125	0.325520833333333\\
0.126953125	0.329427083333333\\
0.126953125	0.331380208333333\\
0.126953125	0.332356770833333\\
0.126953125	0.333333333333333\\
0.126953125	0.334309895833333\\
0.126953125	0.335286458333333\\
0.126953125	0.337239583333333\\
0.126953125	0.341145833333333\\
0.126953125	0.348958333333333\\
0.126953125	0.364583333333333\\
0.126953125	0.395833333333333\\
0.126953125	0.458333333333333\\
0.12890625	0.317708333333333\\
0.12890625	0.325520833333333\\
0.12890625	0.329427083333333\\
0.12890625	0.331380208333333\\
0.12890625	0.332356770833333\\
0.12890625	0.333333333333333\\
0.12890625	0.334309895833333\\
0.12890625	0.335286458333333\\
0.12890625	0.337239583333333\\
0.12890625	0.341145833333333\\
0.12890625	0.348958333333333\\
0.12890625	0.364583333333333\\
0.12890625	0.395833333333333\\
0.12890625	0.458333333333333\\
0.1328125	0.317708333333333\\
0.1328125	0.325520833333333\\
0.1328125	0.329427083333333\\
0.1328125	0.331380208333333\\
0.1328125	0.332356770833333\\
0.1328125	0.333333333333333\\
0.1328125	0.334309895833333\\
0.1328125	0.335286458333333\\
0.1328125	0.337239583333333\\
0.1328125	0.341145833333333\\
0.1328125	0.348958333333333\\
0.1328125	0.364583333333333\\
0.1328125	0.395833333333333\\
0.1328125	0.458333333333333\\
0.140625	0.317708333333333\\
0.140625	0.325520833333333\\
0.140625	0.329427083333333\\
0.140625	0.331380208333333\\
0.140625	0.332356770833333\\
0.140625	0.333333333333333\\
0.140625	0.334309895833333\\
0.140625	0.335286458333333\\
0.140625	0.337239583333333\\
0.140625	0.341145833333333\\
0.140625	0.348958333333333\\
0.140625	0.364583333333333\\
0.140625	0.395833333333333\\
0.140625	0.458333333333333\\
0.15625	0.317708333333333\\
0.15625	0.325520833333333\\
0.15625	0.329427083333333\\
0.15625	0.331380208333333\\
0.15625	0.332356770833333\\
0.15625	0.333333333333333\\
0.15625	0.334309895833333\\
0.15625	0.335286458333333\\
0.15625	0.337239583333333\\
0.15625	0.341145833333333\\
0.15625	0.348958333333333\\
0.15625	0.364583333333333\\
0.15625	0.395833333333333\\
0.15625	0.458333333333333\\
0.1875	0.317708333333333\\
0.1875	0.325520833333333\\
0.1875	0.329427083333333\\
0.1875	0.331380208333333\\
0.1875	0.332356770833333\\
0.1875	0.333333333333333\\
0.1875	0.334309895833333\\
0.1875	0.335286458333333\\
0.1875	0.337239583333333\\
0.1875	0.341145833333333\\
0.1875	0.348958333333333\\
0.1875	0.364583333333333\\
0.1875	0.395833333333333\\
0.1875	0.458333333333333\\
0.25	0.317708333333333\\
0.25	0.325520833333333\\
0.25	0.329427083333333\\
0.25	0.331380208333333\\
0.25	0.332356770833333\\
0.25	0.333333333333333\\
0.25	0.334309895833333\\
0.25	0.335286458333333\\
0.25	0.337239583333333\\
0.25	0.341145833333333\\
0.25	0.348958333333333\\
0.25	0.364583333333333\\
0.25	0.395833333333333\\
0.25	0.458333333333333\\
};
\addplot [color=mycolor2, only marks, mark size=2.5pt, mark=x, mark options={solid, mycolor2}, forget plot]
  table[row sep=crcr]{%
0.109375	0.317708333333333\\
0.109375	0.325520833333333\\
0.109375	0.329427083333333\\
0.109375	0.331380208333333\\
0.109375	0.332356770833333\\
0.109375	0.333333333333333\\
0.109375	0.334309895833333\\
0.109375	0.335286458333333\\
0.109375	0.337239583333333\\
0.109375	0.341145833333333\\
0.109375	0.348958333333333\\
0.109375	0.364583333333333\\
0.109375	0.395833333333333\\
0.109375	0.458333333333333\\
0.1171875	0.317708333333333\\
0.1171875	0.325520833333333\\
0.1171875	0.329427083333333\\
0.1171875	0.331380208333333\\
0.1171875	0.332356770833333\\
0.1171875	0.333333333333333\\
0.1171875	0.334309895833333\\
0.1171875	0.335286458333333\\
0.1171875	0.337239583333333\\
0.1171875	0.341145833333333\\
0.1171875	0.348958333333333\\
0.1171875	0.364583333333333\\
0.1171875	0.395833333333333\\
0.1171875	0.458333333333333\\
0.12109375	0.317708333333333\\
0.12109375	0.325520833333333\\
0.12109375	0.329427083333333\\
0.12109375	0.331380208333333\\
0.12109375	0.332356770833333\\
0.12109375	0.333333333333333\\
0.12109375	0.334309895833333\\
0.12109375	0.335286458333333\\
0.12109375	0.337239583333333\\
0.12109375	0.341145833333333\\
0.12109375	0.348958333333333\\
0.12109375	0.364583333333333\\
0.12109375	0.395833333333333\\
0.12109375	0.458333333333333\\
0.123046875	0.317708333333333\\
0.123046875	0.325520833333333\\
0.123046875	0.329427083333333\\
0.123046875	0.331380208333333\\
0.123046875	0.332356770833333\\
0.123046875	0.333333333333333\\
0.123046875	0.334309895833333\\
0.123046875	0.335286458333333\\
0.123046875	0.337239583333333\\
0.123046875	0.341145833333333\\
0.123046875	0.348958333333333\\
0.123046875	0.364583333333333\\
0.123046875	0.395833333333333\\
0.123046875	0.458333333333333\\
0.1240234375	0.317708333333333\\
0.1240234375	0.325520833333333\\
0.1240234375	0.329427083333333\\
0.1240234375	0.331380208333333\\
0.1240234375	0.332356770833333\\
0.1240234375	0.333333333333333\\
0.1240234375	0.334309895833333\\
0.1240234375	0.335286458333333\\
0.1240234375	0.337239583333333\\
0.1240234375	0.341145833333333\\
0.1240234375	0.348958333333333\\
0.1240234375	0.364583333333333\\
0.1240234375	0.395833333333333\\
0.1240234375	0.458333333333333\\
};

\addplot [color=black, dotted, line width=1.0pt, forget plot]
  table[row sep=crcr]{%
0.109375	0.333333333333333\\
0.125	0.333333333333333\\
};

\addplot [color=black, line width=1.0pt, forget plot]
table[row sep=crcr]{%
	0.125	0.333333333333333\\
	0.25	0.333333333333333\\
};

\addplot [color=black, dotted, line width=1.0pt, forget plot]
  table[row sep=crcr]{%
0.125	0.317708333333333\\
0.125	0.333333333333333\\
};

\addplot [color=black, line width=1.0pt, forget plot]
table[row sep=crcr]{%
	0.125	0.333333333333333\\
	0.125	0.458333333333333\\
};

\end{axis}
\end{tikzpicture}%
		\caption{Comparison of $\Phi_{\text{eH}}$ and $\Phi_{\text{eH}, \delta}^{\pc}$ for varying parameters $k, \ell$.
		Markers {\color{blue}\raisebox{-0.3mm}{\scalebox{1.3}{$\circ$}}}/{\color{red} $\times$} indicate $|\Phi_{\text{eH}} - \Phi_{\text{eH}, \delta}^{\pc}|_\infty$ smaller/larger than $10^{-5}$.
		Black lines indicate analytically known parameter bounds.}
		\label{fig:HenckyParameterStudy}
	\end{center}
\end{figure}
These results confirm the polyconvexity of $W_{\text{eH}}$ if $k \geq \frac{1}{3}$ and $\ell \geq \frac{1}{8}$. While, according to our numerical investigation, the bound for $\ell$ seems sharp in the sense that polyconvexity is lost for $\ell <\frac{1}{8}$, values of $k<\frac{1}{3}$ seem to allow polyconvexity.

\section{Conclusion}\label{sec:conclusion}
We have shown that the computational efficiency of algorithms for polyconvex isotropic energy densities can be significantly improved. Based on the characterization of isotropic polyconvexity in terms of the signed singular value instead of the full matrix input \cite{WiePet23}, we have shown how the corresponding dimensional reduction from $d^2$- to $d$-dimensional space can be realized algorithmically. 
The convexification of the lifted space can be performed by computational geometry algorithms or linear programming. 
Both variants not only have minimal complexity in representative benchmarks but also allow numerical investigation of exponentiated Hencky-logarithmic energy densities and their polyconvexity properties for a range of parameters beyond those known analytically.

\section*{Acknowledgement} Fruitful discussions with Daniel Balzani and Maximilian Köhler are greatly acknowledged.

\bibliographystyle{alpha}
\bibliography{singularValuePolyconvexification}

\end{document}